\DeclareMathOperator*{\argmin}{\mathop{argmin}}
\DeclareMathOperator*{\argmax}{\mathop{argmax}}
\newtheorem{lemma}{Lemma}
\newtheorem{proposition}{Proposition}
\newtheorem{theorem}{Theorem}
\newtheorem{corollary}{Corollary}
\newtheorem{remark}{Remark}
\def\mathbold{\boldsymbol}
\def\bfh{\mathbold{h}}
\def\bfu{\mathbold{u}}
\def\bfI{\mathbold{I}}
\def\bfY{\mathbold{Y}}
\def\bfmu{\mathbold{\mu}}
\def\bfxi{\mathbold{\xi}}
\def\bfzeta{\mathbold{\zeta}}
\def\bfeta{\mathbold{\eta}}
\def\bflambda{\mathbold{\lambda}}
\def\bfpi{\mathbold{\pi}}
\def\bfpsi{\mathbold{\psi}}
\newcommand{\R}{{\mathbb{R}}}
\newcommand{\E}{{\mathbb{E}}}
\newcommand{\cH}{\mathcal{H}}
\newcommand{\cK}{\mathcal{K}}
\newcommand{\cN}{\mathcal{N}}
\newcommand{\cX}{\mathcal{X}}
\newcommand{\MSE}{\mathop{\mathrm{MSE}}}
\newcommand{\hMSE}{\widehat{\mathop{\mathrm{MSE}}}}
\newcommand{\epr}{\hfill\hbox{\hskip 4pt\vrule width 5pt
                  height 6pt depth 1.5pt}\vspace{0.5cm}\par}
\newcommand{\kf}{{\ff}}
\newcommand{\bff}{{\ff}}
\newcommand{\ee}{{\sf e}}
\newcommand{\ff}{{\sf f}}
\newcommand{\bflambdaproj}{\bflambda^{\textsc{proj}}}
\newcommand{\bflambdaq}{\bflambda^{Q}}
\newcommand{\lambdaq}{\lambda^{Q}}
\begin{document}
%
% paper title
% Titles are generally capitalized except for words such as a, an, and, as,
% at, but, by, for, in, nor, of, on, or, the, to and up, which are usually
% not capitalized unless they are the first or last word of the title.
% Linebreaks \\ can be used within to get better formatting as desired.
% Do not put math or special symbols in the title.
\title{Bayesian Model Averaging with Exponentiated Least Squares Loss}
%
%
% author names and IEEE memberships
% note positions of commas and nonbreaking spaces ( ~ ) LaTeX will not break
% a structure at a ~ so this keeps an author's name from being broken across
% two lines.
% use \thanks{} to gain access to the first footnote area
% a separate \thanks must be used for each paragraph as LaTeX2e's \thanks
% was not built to handle multiple paragraphs
%

\date{}
\author{
Dong Dai \\
IMS Health, PA, USA \\
\and
Lei Han \\
Tencent AI Lab, Shenzhen, China \\
\and
Ting Yang \\
Rutgers University, NJ, USA
\and  
Tong Zhang \\
Tencent AI Lab, Shenzhen, China \\
}

% make the title area
\maketitle

% As a general rule, do not put math, special symbols or citations
% in the abstract or keywords.
\begin{abstract}
The model averaging problem is to average multiple models to achieve a prediction accuracy not much worse than that of the best single model in terms of mean squared error. It is known that if the models are misspecified, model averaging is superior to model selection. Specifically, let $n$ be the sample size, then the worst case regret of the former decays at a rate of $O(1/n)$ while the worst case regret of the latter decays at a rate of $O(1/\sqrt{n})$. The recently proposed $Q$-aggregation algorithm \cite{DaiRigZhang12} solves the model averaging problem with the optimal regret of $O(1/n)$ both in expectation and in deviation; however it suffers from two limitations: (1) for continuous dictionary, the proposed greedy algorithm for solving $Q$-aggregation is not applicable; (2) the formulation of $Q$-aggregation appears ad hoc without clear intuition. This paper examines a different approach to model averaging by considering a Bayes estimator for deviation optimal model averaging by using exponentiated least squares loss. We establish a primal-dual relationship of this estimator and that of $Q$-aggregation and propose new algorithms that satisfactorily resolve the above mentioned limitations of $Q$-aggregation.
\end{abstract}

\section{Introduction}

This paper considers the model averaging problem, where the goal is to average multiple models in order to achieve improved prediction accuracy.
Let $x_1, \ldots, x_n$ be $n$ given design points from a space $\cX$, let $\cH=\{f_1, \ldots, f_M\}$ be a given dictionary of real valued functions on $\cX$ and denote $\bff_j=(f_j(x_1),\ldots,f_j(x_n))^\top \in \R^n$ for each $j$. The goal is to estimate an unknown regression function $\eta: \cX \to \R$ at the design points based on observations $$
y_i = \eta(x_i) +\xi_i\,,
$$
where $\xi_1, \ldots, \xi_n$ are i.i.d. variables from $\cN(0, \sigma^2)$.

The performance of an estimator $\hat \eta$ is measured by its mean squared error (MSE) defined by
$$
\MSE(\hat \eta)=\frac{1}{n} \sum_{i=1}^n (\hat \eta(x_i)-\eta(x_i))^2\,.
$$
We want to find an estimator $\hat \eta$ that mimics the function in the dictionary with the smallest MSE.
Formally, a good estimator $\hat \eta$ should satisfy the following \emph{exact oracle inequality} in a certain probabilistic sense:
\begin{equation}
\label{EQ:oiDelta}
\MSE(\hat \eta) \le \min_{j=1, \dots, M} \MSE(f_j) + \Delta(n,M,\sigma^2)\,,
\end{equation}
where the remainder term $\Delta>0$ should be as small as possible.

The problem of model averaging has been well-studied, and it is known (see, e.g., \cite{Tsy03,Rig12}) that the smallest possible order
for $\Delta(n,M,\sigma^2)$ is $\sigma^2\log M/n$ for oracle inequalities in expectation, where ``the smallest possible'' is understood in the following minimax sense. There exists a dictionary $\cH=\{f_1, \ldots, f_M\}$ such that the following lower bound holds. For any estimator $\hat \eta$, there exists a regression function $\eta$ such that
$$
\E\MSE(\hat \eta) \ge \min_{j=1, \dots, M} \MSE(f_j) + C\sigma^2 \frac{\log M}{n}\,
$$
for some positive constant $C$. It also implies that the lower bound holds not only in expectation but also with positive probability.

Although our goal is to achieve an MSE as close as that of the best model in $\cH$,
it is known (see Theorem~2.1 of \cite{RigTsy12}) that there
exists a dictionary~$\cH$ such that any estimator $\hat \eta$ taking values restricted to the elements of $\cH$  (such an estimator is  referred to as a {\em model selection estimator}) cannot
achieve an oracle inequality of form~\eqref{EQ:oiDelta} with a remainder term of order smaller than $\sigma\sqrt{(\log M)/n}$;
in other words, model selection is suboptimal for the purpose of competing with the best single model from a given family.

Instead of {\em model selection}, we can employ {\em model averaging} to derive oracle inequalities of form \eqref{EQ:oiDelta} that achieves the optimal regret
in expectation (see the references in \cite{RigTsy12}). More recently, several work has produced optimal oracle inequalities for model averaging
that not only hold in expectation but also in deviation~\cite{Aud08, LecMen09, GaiLec11, DaiZha11, Rig12, DaiRigZhang12}.
In particular, the current work is closely related to the $Q$-aggregation estimator investigated in \cite{DaiRigZhang12} which solves the optimal model averaging problem both in expectation and in deviation with a remainder term $\Delta(n,M,\sigma^2)$ of order $O(1/n)$; the authors also proposed a greedy algorithm GMA-0 for $Q$-aggregation which improves the Greedy Model Averaging (GMA) algorithm firstly proposed by \cite{DaiZha11}. Yet there are still two limitations of $Q$-aggregation:
(1) $Q$-aggregation can be generalized for continuous candidates dictionary $\cH$, but the greedy model averaging method GMA-0 can not be applied in such setting;
(2) $Q$-aggregation can be regarded intuitively as regression with variance penalty, but it lacks a good theoretical interpretation.

In this paper we introduce a novel method called {\em Bayesian Model Averaging with Exponentiated Least Squares Loss} (BMAX).
We note that the previously studied exponential weighted model aggregation estimator EWMA (e.g., \cite{RigTsy12}) is the Bayes estimator under the least squares loss (posterior mean), which leads to optimal regret in expectation but is suboptimal in deviation. In contrast, the new BMAX model averaging estimator is essentially a Bayes estimator under an appropriately defined {\em exponentiated least squares loss}, and we will show that the $Q$-aggregation formulation (with Kullback-Leibler entropy) in \cite{DaiRigZhang12} is essentially a dual representation of the newly introduced BMAX formulation, and it directly implies the optimality of the aggregate by BMAX. Computationally, the new model aggregation method BMAX can be approximately solved by a greedy model averaging algorithm and a gradient descend algorithm which is applicable to continuous dictionary. In summary, this paper establishes a natural Bayesian interpretation of $Q$-aggregation, and provides additional computational procedures that are applicable for the continuous dictionary setting. This relationship provides deeper understanding for model averaging procedures, and resolves the above mentioned limitations of the $Q$-aggregation scheme.

\section{Notations}

This section introduces some notations used in this paper. In the following, we denote by $\bfY=(y_1,\ldots,y_n)^\top$ the observation vector, $\bfeta=(\eta(x_1),\ldots,\eta(x_n))^\top$ the model output, and $\bfxi=(\xi_1,\ldots,\xi_n)^\top$ the noise vector. The underlying statistical model can be expressed as
\begin{equation}
\bfY= \bfeta+\bfxi \;,
\label{EQ:gauss}
\end{equation}
with $\bfxi\sim N(0,\sigma^2\bfI_n)$.
We also denote $\ell_2$ norm as $\|\bfY\|_2=(\sum_{i=1}^n y_i^2)^{1/2}$, and the inner product as $\langle \bfxi, \bff \rangle = \bfxi^\top \bff$.
Let $\Lambda^M$ be the flat simplex in $\R^M$ defined by
$$
\Lambda^M = \left\{ \bflambda=(\lambda_1,\ldots,\lambda_M)^\top \in \R^M : \lambda_j \geq 0, \sum_{j=1}^M \lambda_j= 1\right\} \;,
$$
and $\bfpi=(\pi_1, \ldots, \pi_M)^\top \in \Lambda^M$ be a given prior.

Each $\bflambda \in \Lambda^M$ yields a model averaging estimator as
$f_{\bflambda} = \sum_{j=1}^M \lambda_j f_j $; that is,
using the vector notation $\kf_{\bflambda}=(f_{\bflambda}(x_1),\ldots, f_{\bflambda}(x_n))^\top$ we have
$\kf_{\bflambda} =  \sum_{j=1}^M \lambda_j \bff_j$.
The Kullback-Leibler divergence for $\bflambda,\bfpi \in \Lambda^M$ is defined as
\begin{eqnarray*}
\cK(\bflambda,\bfpi)= \sum_{j=1}^M \lambda_j \log(\lambda_j/\pi_j) \;,
\end{eqnarray*}
and in the definition we use the convention $0\cdot\log(0)=0$.
For matrices $A, B \in \R^{n\times n}$, $A\geq B$ indicates that $A-B$ is positive semi-definite.

\section{Bayesian Model Averaging with Exponentiated Least Squares Loss}
\label{sec:bmax}
The traditional Bayesian model averaging estimator is the exponential weighted model averaging estimator EWMA \cite{RigTsy12} which optimizes
the least squares loss. Although the estimator is optimal in expectation, it is suboptimal in deviation \cite{DaiRigZhang12}.
In this section we introduce a different Bayesian model averaging estimator called BMAX that optimizes an exponentiated least squares loss.
%, and we prove that it is not only optimal in expectation, but also optimal in deviation.

In order to introduce the BMAX estimator, we consider the following Bayesian framework, where we should be noted that
the assumptions below are only used to derive BMAX, and these assumptions are not used in our theoretical analysis.
$\bfY$ is a normally distributed observation vector
with mean $\bfmu=(\mu_1,\ldots,\mu_M)^\top$ and covariance matrix $\omega^2\bfI_n$:
\begin{equation}
\bfY|\bfmu \sim N(\bfmu,\omega^2\bfI_n) \;,
\end{equation}
and for $j=1,\ldots,M$, the prior for each model $\bff_j$ is
\begin{equation}
\pi(\bfmu=\bff_j) = \pi_j \;.
\end{equation}

In this setting, the posterior distribution of $\bfmu$ given $\bfY$ is
\begin{align*}
p(\bfmu=\bff_j|\bfY)
= \frac{p(\bfY|\bfmu=\bff_j)p(\bfmu=\bff_j)}{\sum_{j=1}^M p(\bfY|\bfmu=\bff_j)p(\bfmu=\bff_j)}
= \frac{\exp\left(-\frac{\|\bff_j-\bfY\|_2^2}{2\omega^2}\right) \pi_j }{\sum_{j=1}^M \exp\left(-\frac{\|\bff_j-\bfY\|_2^2}{2\omega^2}\right) \pi_j } .
\end{align*}
In the Bayesian decision theoretical framework considered in this paper,
the quantity of interest is $\bfeta=\E\bfY$, and we consider a  loss function $L(\bfpsi,\bfmu)$ which we would like to minimize with respect
to the posterior distribution.
The corresponding Bayes estimator $\hat\bfpsi$ minimizes the posterior expected loss from $\bfmu$ as follows:
\begin{equation}
\hat\bfpsi = \argmin_{\bfpsi\in \R^n} \E \left[ L(\bfpsi,\bfmu)|\bfY \right] \;.
\label{EQ:def-Bayes}
\end{equation}
It is worth pointing out that the above Bayesian framework is only used to obtain decision theoretically motivated model averaging estimators (Bayesian estimators have good theoretical properties such as admissibility, etc).
In particular we do not assume that the model itself is correctly specified. That is, in this paper we allow misspecified models, where
the parameters $\bfmu$ and $\omega^2$ are not necessarily equal to the true mean $\bfeta$ and the true variance $\sigma^2$ in \eqref{EQ:gauss}, and $\bfeta$ does not necessarily belong to the dictionary $\{\bff_1,\ldots,\bff_M\}$.

The Bayesian estimator of \eqref{EQ:def-Bayes} depends on the underlying loss function $L(\cdot,\cdot)$.
For example, under the standard least squares loss $L(\bfpsi,\bfmu)=\|\bfpsi-\bfmu\|_2^2$, the Bayes estimator is the posterior mean, which leads to
the Exponential Weighted Model Aggregation (EWMA) estimator \cite{RigTsy12} :
\begin{equation}
\bfpsi_{\ell_2}(\omega^2) = \frac{ \sum_{j=1}^M \exp\left(-\frac{\|\bff_j-\bfY\|_2^2}{2\omega^2}\right) \pi_j \bff_j }{\sum_{j=1}^M \exp\left(-\frac{\|\bff_j-\bfY\|_2^2}{2\omega^2}\right) \pi_j } .
\label{EQ:defpsil2}
\end{equation}
This estimator is optimal in expectation ~\cite{DalTsy07, DalTsy08}, but suboptimal in deviation ~\cite{DaiRigZhang12}.

In this paper, we introduce the following {\em exponentiated least squares loss} motivated from the exponential moment technique for proving large
deviation tail bounds for sums of random variables:
\begin{equation}
L(\bfpsi,\bfmu)=\exp\left(\frac{1-\nu}{2\omega^2}\|\bfpsi-\bfmu\|_2^2\right) \;,
\label{EQ:def-exploss}
\end{equation}
where the parameter $\nu\in(0,1)$.
It is easy to verify that the Bayes estimator defined by~\eqref{EQ:def-Bayes} with the loss function defined in \eqref{EQ:def-exploss} can be written as
\begin{equation}
\bfpsi_{X}(\omega^2,\nu)  = \argmin_{\bfpsi\in\R^n} J(\bfpsi) \;,
\label{EQ:defpsiex}
\end{equation}
where
\begin{equation}
J(\bfpsi) = \sum_{j=1}^M \pi_j \exp\left(-\frac{1}{2\omega^2}\|\bff_j-\bfY\|_2^2+\frac{1-\nu}{2\omega^2}\|\bfpsi-\bff_j\|_2^2\right) \;.
\label{EQ:defJ}
\end{equation}
The estimator $\bfpsi_{X}(\omega^2,\nu)$ will be referred to as the Bayesian model aggregation estimator with exponentiated least squares loss (BMAX).

To minimize $J(\bfpsi)$, it is equivalent to minimize $\log J(\bfpsi)$. Lemma~\ref{LEM:logJ-2ord-convexity} below shows the {\em strong convexity} and {\em smoothness} (under some conditions) of $\log J(\bfpsi)$.

Given $\nu\in(0,1)$ and $\omega>0$, we define
\begin{align}
A_1 =& \frac{1-\nu}{\omega^2} \;; \label{EQ:defA1}
\end{align}
moreover, if the $\ell_2$-norm of every $\bff_j$ is bounded by a constant $L\in\R$:
\begin{equation}
\|\bff_j\|_2 \leq L  \;,\;\; \forall\;j=1,\ldots,M \;,
\label{CON:f-l2normbound}
\end{equation}
we define $A_2$ and $A_3$ as
\begin{align}
A_2 =& \frac{1-\nu}{\omega^2}+ \left(\frac{1-\nu}{\omega^2}\right)^2 L^2 \;, \label{EQ:defA2} \\
A_3 =& \left(\frac{1-\nu}{\omega^2}\right) L^2 +  \left(\frac{1-\nu}{\omega^2}\right)^2 L^4 \;.  \label{EQ:defA3}
\end{align}

\begin{lemma}
For any $\bfpsi\in\R^n$, define the Hessian matrix of  $\log J(\bfpsi)$ as
$\nabla^2 \log J(\bfpsi)= \frac{\partial^2 \log J(\bfpsi)}{\partial{\bfpsi}\partial{\bfpsi^\top}}$,
then we have
\begin{equation}
\nabla^2 \log J(\bfpsi) \geq A_1 \bfI_n \;.
\label{INEQ:logJ-Hes-LB}
\end{equation}
If $\{\bff_1,\ldots,\bff_M\}$ satisfies condition~\eqref{CON:f-l2normbound}, then
\begin{equation}
\nabla^2 \log J(\bfpsi) \leq A_2 \bfI_n \;,
\label{INEQ:logJ-Hes-UB}
\end{equation}
where $A_1$ and $A_2$ are defined in \eqref{EQ:defA1} and \eqref{EQ:defA2}.
\label{LEM:logJ-2ord-convexity}
\end{lemma}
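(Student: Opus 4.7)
The plan is to exploit the log-sum-exp structure of $\log J(\bfpsi)$. Write $g_j(\bfpsi) = -\tfrac{1}{2\omega^2}\|\bff_j - \bfY\|_2^2 + \tfrac{1-\nu}{2\omega^2}\|\bfpsi - \bff_j\|_2^2$, so that $J(\bfpsi) = \sum_j \pi_j \exp(g_j(\bfpsi))$. The key observation is that $g_j$ is a quadratic in $\bfpsi$ with gradient $\nabla g_j(\bfpsi) = \tfrac{1-\nu}{\omega^2}(\bfpsi - \bff_j)$ and constant Hessian $\nabla^2 g_j(\bfpsi) = \tfrac{1-\nu}{\omega^2}\bfI_n$.

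Introduce the data-dependent probability weights $w_j(\bfpsi) = \pi_j \exp(g_j(\bfpsi))/J(\bfpsi)$, which lie in $\Lambda^M$. A direct computation then gives the standard log-sum-exp identity
\begin{equation*}
\nabla^2 \log J(\bfpsi) \;=\; \sum_{j=1}^M w_j(\bfpsi)\, \nabla^2 g_j(\bfpsi) \;+\; \mathrm{Cov}_w\!\bigl(\nabla g_j(\bfpsi)\bigr),
\end{equation*}
where $\mathrm{Cov}_w(\nabla g_j) := \sum_j w_j \nabla g_j (\nabla g_j)^\top - \bigl(\sum_j w_j \nabla g_j\bigr)\bigl(\sum_j w_j \nabla g_j\bigr)^\top$. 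I would derive this by differentiating $\log J$ twice, carefully tracking how differentiating the weights $w_j$ produces the covariance term.

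Given this decomposition, the lower bound \eqref{INEQ:logJ-Hes-LB} is immediate: the covariance term is positive semidefinite and $\sum_j w_j \nabla^2 g_j = \tfrac{1-\nu}{\omega^2}\bfI_n = A_1\bfI_n$. For the upper bound \eqref{INEQ:logJ-Hes-UB}, I need to control the covariance. Since $\nabla g_j = \tfrac{1-\nu}{\omega^2}(\bfpsi - \bff_j)$, the $\bfpsi$ piece is constant in $j$ and drops out, leaving $\mathrm{Cov}_w(\nabla g_j) = \bigl(\tfrac{1-\nu}{\omega^2}\bigr)^2 \mathrm{Cov}_w(\bff_j)$. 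For any unit vector $\bfv\in\R^n$,
\begin{equation*}
\bfv^\top \mathrm{Cov}_w(\bff_j)\, \bfv \;=\; \mathrm{Var}_w(\bfv^\top \bff_j) \;\le\; \sum_{j=1}^M w_j (\bfv^\top \bff_j)^2 \;\le\; \max_j \|\bff_j\|_2^2 \;\le\; L^2
\end{equation*}
by Cauchy--Schwarz and condition \eqref{CON:f-l2normbound}. Hence $\mathrm{Cov}_w(\nabla g_j) \le \bigl(\tfrac{1-\nu}{\omega^2}\bigr)^2 L^2 \bfI_n$, which combined with the expected Hessian term yields exactly $A_2 \bfI_n$.

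The only mildly delicate step is the clean derivation of the covariance identity for $\nabla^2 \log J$; the rest is a routine bound on the variance of a bounded random vector. I do not expect any serious obstacle, since the quadratic form of $g_j$ makes both $\nabla g_j$ and $\nabla^2 g_j$ particularly simple. The constant $A_3$ does not appear in this lemma, so it plays no role here (it is presumably used elsewhere in the paper, e.g.\ in bounding $\|\nabla \log J\|_2^2$).
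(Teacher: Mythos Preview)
Your proposal is correct and follows essentially the same approach as the paper: the weights $w_j(\bfpsi)$ are exactly the paper's $\lambda_j$, and your log-sum-exp Hessian identity is precisely the decomposition the paper derives (written there as $\tfrac{1-\nu}{\omega^2}\bfI_n + \bigl(\tfrac{1-\nu}{\omega^2}\bigr)^2\sum_j \lambda_j(\kf_{\bflambda}-\bff_j)(\kf_{\bflambda}-\bff_j)^\top$). The upper bound argument is also the same---the paper drops the $-\kf_{\bflambda}\kf_{\bflambda}^\top$ term and bounds $\sum_j \lambda_j \bff_j\bff_j^\top \le L^2\bfI_n$, which is your variance-$\le$-second-moment step in matrix form.
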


\section{Dual Representation and $Q$-aggregation}
\label{sec:duality}

In this section, we will show that the \emph{$Q$-aggregation} scheme of~\cite{DaiRigZhang12} with the standard Kullback-Leibler entropy
solves a dual representation of the BMAX formulation defined by~\eqref{EQ:defpsiex}.

Given $\bfY$ and $\{\bff_1,\ldots,\bff_M\}$, $Q$-aggregation $\kf_{\bflambdaq}$ is defined as follows:
\begin{equation}
\kf_{\bflambdaq}= \sum_{j=1}^M \lambdaq_j \bff_j \;,
\label{EQ:defQagg}
\end{equation}
where $\bflambdaq = (\lambdaq_1,\ldots,\lambdaq_M)^\top \in \Lambda^M$ such that
\begin{equation}
\label{EQ:deflq}
\bflambdaq \in \argmin_{\bflambda \in \Lambda^M} Q(\bflambda) \;,
\end{equation}
\begin{equation}
\label{EQ:defQ}
Q(\bflambda)=  \|\kf_{\bflambda}-\bfY\|_2^2 + \nu \sum_{j=1}^M \lambda_j \|\bff_j-\kf_{\bflambda} \|_2^2 + 2\omega^2 \cK_\rho(\bflambda,\bfpi) \;,
\end{equation}
for some $\nu\in (0,1)$, where the $\rho$-entropy $\cK_\rho(\bflambda,\bfpi)$ is defined as
\begin{equation}
\cK_\rho(\bflambda, \bfpi)=\sum_{j=1}^M \lambda_j \log \left(\frac{\rho(\lambda_j)}{\pi_j}\right) \;,
\label{EQ:defKL-rho}
\end{equation}
where $\rho$ is a real valued function on $[0,1]$ satisfying
\begin{align}
\rho(t)\ge t \;,  \quad
t\log\rho(t) \text{ is convex} \;.
\label{CON:rho}
\end{align}

When $\rho(t)=t$, $\cK_\rho(\bflambda, \bfpi)$ becomes $\cK(\bflambda,\bfpi)$, i.e., the Kullback-Leibler entropy. When $\rho(t)=1$, $\cK_\rho(\bflambda, \bfpi)=\sum_{j=1}^M \lambda_j \log(1/\pi_j)$, a linear entropy in $\Lambda^M$, and in particular the penalty $\cK_\rho(\bflambda, \bfpi)$ in ~\eqref{EQ:defQ} becomes a constant when $\bfpi$ is a flat prior.

Now, to illustrate duality, we shall first introduce a function $T:\R^n\rightarrow\R$ as
\begin{align}
T(\bfh) = &-\frac{\nu}{1-\nu}\|\bfh-\bfY\|_2^2
-2\omega^2 \log\left(\sum_{j=1}^M \pi_j \exp\left(-\frac{\nu}{2\omega^2}\|\bff_j-\bfh\|_2^2\right)\right) \;,
\label{EQ:defT}
\end{align}
and denote the maximizer of $T(\bfh)$ as
\begin{equation}
\hat{\bfh} = \argmax_{\bfh \in \R^n} T(\bfh) \;.
\label{EQ:defhat-h-T}
\end{equation}

Define function $S: \Lambda^M \times \R^n \rightarrow \R $ as
\begin{equation}
S(\bflambda,\bfh) = -\frac{\nu}{1-\nu}\|\bfh-\bfY\|_2^2 + \nu \sum_{j=1}^M \lambda_j\|\bff_j-\bfh\|_2^2+ 2\omega^2 \cK(\bflambda,\bfpi) \;.
\label{def:S}
\end{equation}
It is not difficult to verify that for $\nu \in (0,1)$,
$S(\bflambda,\bfh)$ is convex in $\bflambda$ and concave in $\bfh$. 
The following duality lemma states the relationship between $\hat\bfh$ and $\kf_{\bflambda^Q}$.
\begin{lemma}
When $\rho(t)=t$, we have the following result
\[
Q(\bflambda) = \max_{\bfh \in \R^n} S(\bflambda,\bfh) , \qquad 
T(\bfh) = \min_{\bflambda \in \Lambda^M} S(\bflambda,\bfh) .
\]
\begin{align*}
\min_{\bflambda \in \Lambda^M} Q(\bflambda) = \min_{\bflambda \in \Lambda^M} \max_{\bfh \in \R^n} S(\bflambda,\bfh)
=\max_{\bfh \in \R^n} \min_{\bflambda \in \Lambda^M} S(\bflambda,\bfh)
=\max_{\bfh \in \R^n} T(\bfh) ,
\end{align*}
where the equality is achieved at $(\bflambdaq, \hat\bfh)$.
Moreover, we have
\[
\left\{(\bflambdaq, \hat\bfh)\right\} =  A\cap B ,
\]
where $A$ and $B$ are two hyper-surfaces  in $\Lambda^M \times \R^n$
defined as
\begin{align}
A=&\left\{(\bflambda,\bfh)\in \Lambda^M \times \R^n: \bfh=\frac{1}{\nu}\bfY-\frac{1-\nu}{\nu}\kf_{\bflambda}\right\} \;, 
\nonumber
\\
B=&\Bigg\{(\bflambda,\bfh)\in \Lambda^M \times \R^n:
\lambda_j =
  \frac{\exp\left(-\frac{\nu}{2\omega^2}\|\bff_j-\bfh\|_2^2\right)\pi_j}{\sum_{i=1}^M
    \exp\left(-\frac{\nu}{2\omega^2}\|\bff_i-\bfh\|_2^2\right)\pi_i}\Bigg\}
\; .
\label{def:hyppl}
\end{align}
\label{LEM:Q-S-T}
\end{lemma}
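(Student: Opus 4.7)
The idea is to verify that the two equalities in the lemma arise as maximizations/minimizations of the single saddle function $S(\bflambda,\bfh)$ in \eqref{def:S}, and then to invoke a standard minimax exchange. The saddle point is pinned down by two first-order conditions, which are precisely the defining relations of the hyper-surfaces $A$ and $B$.

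First I would prove $Q(\bflambda) = \max_{\bfh\in\R^n} S(\bflambda,\bfh)$. For fixed $\bflambda$, $S(\bflambda,\cdot)$ is quadratic in $\bfh$ with Hessian $2\bigl(-\tfrac{\nu}{1-\nu}+\nu\bigr)\bfI_n = -\tfrac{2\nu^2}{1-\nu}\bfI_n$, hence strictly concave with a unique maximizer. Setting $\nabla_\bfh S=0$ and using $\sum_j\lambda_j=1$ gives $\bfh=\tfrac{1}{\nu}\bfY-\tfrac{1-\nu}{\nu}\kf_{\bflambda}$, which is exactly the $A$-relation. Substituting back and applying the bias--variance identity $\sum_j\lambda_j\|\bff_j-\bfh\|_2^2=\sum_j\lambda_j\|\bff_j-\kf_{\bflambda}\|_2^2+\|\kf_{\bflambda}-\bfh\|_2^2$ collapses the coefficient on $\|\kf_{\bflambda}-\bfY\|_2^2$ to $1$, yielding $Q(\bflambda)$ as in \eqref{EQ:defQ}.

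Next I would prove $T(\bfh) = \min_{\bflambda\in\Lambda^M} S(\bflambda,\bfh)$. For fixed $\bfh$, the $\bflambda$-dependent part of $S$ equals $\sum_j\lambda_j\bigl(\nu\|\bff_j-\bfh\|_2^2 + 2\omega^2\log(\lambda_j/\pi_j)\bigr)$, the classical Gibbs free energy on the simplex. Its minimizer, obtained by a Lagrangian calculation for the constraint $\sum_j\lambda_j=1$, is the exponential-weights rule appearing in $B$, and the minimum value is $-2\omega^2\log\sum_j\pi_j\exp\!\bigl(-\tfrac{\nu}{2\omega^2}\|\bff_j-\bfh\|_2^2\bigr)$. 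Adding the $\bflambda$-independent term $-\tfrac{\nu}{1-\nu}\|\bfh-\bfY\|_2^2$ recovers $T(\bfh)$ in \eqref{EQ:defT}.

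For the minimax equality, $S$ is strictly convex in $\bflambda$ (the KL term is strictly convex on $\Lambda^M$) and strictly concave in $\bfh$; $\Lambda^M$ is compact, and $S(\bflambda,\cdot)\to-\infty$ as $\|\bfh\|_2\to\infty$ uniformly in $\bflambda$, so $\bfh$ may be restricted to a sufficiently large compact ball. Sion's minimax theorem then gives $\min_\bflambda\max_\bfh S=\max_\bfh\min_\bflambda S$, and combining with the two preceding steps yields the full chain $\min_\bflambda Q(\bflambda)=\max_\bfh T(\bfh)$ with value attained at a saddle point $(\bflambdaq,\hat\bfh)$. This saddle point must simultaneously satisfy the two first-order conditions above, i.e.\ the $A$- and $B$-relations, so $(\bflambdaq,\hat\bfh)\in A\cap B$. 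Conversely, any $(\bflambda,\bfh)\in A\cap B$ satisfies $\bfh=\argmax_{\bfh'}S(\bflambda,\bfh')$ and $\bflambda=\argmin_{\bflambda'}S(\bflambda',\bfh)$, hence is a saddle point, and the strict convex-concavity of $S$ forces uniqueness. The main obstacle is the bookkeeping that collapses the substitution back into $Q(\bflambda)$ cleanly; verifying the Sion hypotheses and carrying out the Gibbs minimization are routine.
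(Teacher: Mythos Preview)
Your proposal is correct. The identifications $Q(\bflambda)=\max_{\bfh}S(\bflambda,\bfh)$ and $T(\bfh)=\min_{\bflambda}S(\bflambda,\bfh)$ are exactly what the paper does (the paper derives the latter from a Jensen inequality, which is equivalent to your Gibbs/Lagrangian computation; for the former the paper merely says ``easy to verify'', and in fact your computation of the maximizer $\bfh=\tfrac{1}{\nu}\bfY-\tfrac{1-\nu}{\nu}\kf_{\bflambda}$ is the right one---the paper's parenthetical remark that the optimum is at $\bfh=\kf_{\bflambda}$ is a slip).

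The one genuine methodological difference is how strong duality is established. You invoke Sion's theorem after restricting $\bfh$ to a large compact ball via coercivity. The paper instead proceeds by hand: it first shows $A\cap B\neq\emptyset$ (by checking that the first-order condition $\nabla T(\hat\bfh)=0$ produces a pair in $A\cap B$), then for any $(\bflambda^0,\bfh^0)\in A\cap B$ it combines the weak-duality chain $Q(\bflambda^0)\ge\min_\bflambda Q\ge\max_\bfh T\ge T(\bfh^0)$ with a direct algebraic verification that $Q(\bflambda^0)=T(\bfh^0)$, forcing all inequalities to be equalities. Your route is cleaner if one is comfortable citing Sion; the paper's route is self-contained and avoids checking the compactness/coercivity hypotheses, at the cost of one extra plug-in computation. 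Both arrive at the same uniqueness conclusion via the strict convexity of $Q$ and strict concavity of $T$.
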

Lemma~\ref{LEM:Q-S-T} states that, $(\bflambdaq, \hat\bfh)$ is the only joint of hyper-surfaces  $A$ and $B$, and the only saddle point of function $S(\bflambda,\bfh)$ over space $\Lambda^M \times \R^n$.

With $T(\bfh)$ defined as in \eqref{EQ:defT}, we can employ the transformation
$
\bfh= \frac{1}{\nu}\bfY-\frac{1-\nu}{\nu}\bfpsi ,
$
and it is easy to verify that
\begin{align}
T(\bfh)
%&= -\frac{\nu}{1-\nu}\|(\frac{1}{\nu}\bfY-\frac{1-\nu}{\nu}\bfpsi)-\bfY\|_2^2 - 2\omega^2 \log\left(\sum_{j=1}^M\exp\left(-\frac{\nu}{2\omega^2}\|\bff_j-(\frac{1}{\nu}\bfY-\frac{1-\nu}{\nu}\bfpsi)\|_2^2\right)\pi_j \right) \nonumber \\
%&= -2\omega^2 \log \left(\sum_{j=1}^M \exp\left(-\frac{1}{2\omega^2}\|\bff_j-\bfY\|_2^2+\frac{1-\nu}{2\omega^2}\|\bff_j-\bfpsi\|_2^2\right) \pi_j \right)
=  -2\omega^2 \log\left(J(\bfpsi)\right) \;,
\label{EQ:T-J}
\end{align}
where $J(\bfpsi)$ is defined in \eqref{EQ:defJ}. Since $J(\bfpsi)$ is strictly convex, $T(\bfh)$ is strictly concave and $\hat\bfh$ is unique.
It follows that maximizing $T(\bfh)$ is equivalent to minimizing $J(\bfpsi)$, and thus
\[
\hat\bfh= \frac{1}{\nu}\bfY-\frac{1-\nu}{\nu}\bfpsi_{X}(\omega^2,\nu) \;.
\]
We can combine this representation with
\[
\hat\bfh = \frac{1}{\nu}\bfY-\frac{1-\nu}{\nu}\kf_{\bflambdaq} \;
\]
from Lemma~\ref{LEM:Q-S-T} to obtain $\bfpsi_{X}(\omega^2,\nu)= \kf_{\bflambdaq}$.
Therefore, we directly have the following relationship.
\begin{theorem}\label{TH:psiex-Q}
When $\rho(t)=t$,
\[
\bfpsi_{X}(\omega^2,\nu) = \kf_{\bflambdaq} \;,
\]\noindent
where $\bfpsi_{X}(\omega^2,\nu)$ is defined by \eqref{EQ:defpsiex} and \eqref{EQ:defJ}, and $\kf_{\bflambdaq}$ is defined by \eqref{EQ:defQagg},\eqref{EQ:deflq} and \eqref{EQ:defQ}.
\end{theorem}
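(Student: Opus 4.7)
The plan is to combine two ingredients: the duality/saddle-point structure already established in Lemma~\ref{LEM:Q-S-T}, and the explicit invertible change of variables $\bfh = \frac{1}{\nu}\bfY - \frac{1-\nu}{\nu}\bfpsi$ that turns the dual objective $T(\bfh)$ into the primal objective $J(\bfpsi)$. Concretely, I would argue in three steps: (i) verify the algebraic identity $T(\bfh) = -2\omega^2\log J(\bfpsi)$ under this substitution; (ii) invoke strict convexity of $J$ (hence strict concavity of $T$) to conclude that the unique maximizer $\hat\bfh$ of $T$ and the unique minimizer $\bfpsi_X(\omega^2,\nu)$ of $J$ are related by the same affine map; (iii) read off from Lemma~\ref{LEM:Q-S-T} the alternate expression $\hat\bfh = \frac{1}{\nu}\bfY - \frac{1-\nu}{\nu}\kf_{\bflambdaq}$ (the surface $A$ in~\eqref{def:hyppl}) and equate.

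Step (i) is the only real computation. After substitution one needs, for each $j$,
\begin{equation*}
-\nu\|\bff_j-\bfh\|_2^2 + \tfrac{1-\nu}{\nu}\|\bfY-\bfpsi\|_2^2 \;=\; -\|\bff_j-\bfY\|_2^2 + (1-\nu)\|\bfpsi-\bff_j\|_2^2,
\end{equation*}
which I would get by writing $\bff_j - \bfh = (\bff_j - \bfpsi) - \frac{1}{\nu}(\bfY - \bfpsi)$ and $\bff_j - \bfY = (\bff_j - \bfpsi) - (\bfY - \bfpsi)$ and expanding both sides; the cross term $2\langle \bff_j - \bfpsi, \bfY - \bfpsi\rangle$ and the quadratic in $\bff_j - \bfpsi$ match cleanly, leaving only the constant (in $j$) piece in $\|\bfY-\bfpsi\|_2^2$, which is absorbed via the prefactor $-\frac{\nu}{1-\nu}\|\bfh-\bfY\|_2^2$ in the definition~\eqref{EQ:defT} of $T$. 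Once this pointwise identity is in hand, summing against $\pi_j$ inside the exponential and taking $-2\omega^2\log$ yields the claimed equality between $T(\bfh)$ and $-2\omega^2\log J(\bfpsi)$, as already noted in~\eqref{EQ:T-J}.

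For step (ii), Lemma~\ref{LEM:logJ-2ord-convexity} gives $\nabla^2 \log J(\bfpsi) \ge A_1\bfI_n \succ 0$, so $\log J$ is strictly convex, $J$ is strictly log-convex (hence has a unique minimizer $\bfpsi_X(\omega^2,\nu)$), and consequently $T$ has a unique maximizer $\hat\bfh$. The affine bijection between $\bfh$ and $\bfpsi$ then forces $\hat\bfh = \frac{1}{\nu}\bfY - \frac{1-\nu}{\nu}\bfpsi_X(\omega^2,\nu)$. Step (iii) is just equating this with the surface-$A$ characterization in Lemma~\ref{LEM:Q-S-T}, which yields $\bfpsi_X(\omega^2,\nu) = \kf_{\bflambdaq}$.

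The main obstacle, and in fact the only non-trivial point, is the algebraic verification in step (i); the rest is bookkeeping around uniqueness and an application of the already-proved saddle-point lemma. No additional assumption (such as the boundedness condition~\eqref{CON:f-l2normbound}) is needed, since only the lower bound $A_1\bfI_n$ on the Hessian of $\log J$ is used for uniqueness.
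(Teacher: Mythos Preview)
Your proposal is correct and follows essentially the same route as the paper: the paper's argument (given in the paragraph immediately preceding the theorem) also uses the affine change of variables $\bfh = \frac{1}{\nu}\bfY - \frac{1-\nu}{\nu}\bfpsi$ to obtain the identity~\eqref{EQ:T-J}, invokes strict convexity of $J$ to match the unique optimizers, and then reads off the surface-$A$ relation from Lemma~\ref{LEM:Q-S-T}. Your step~(i) in fact supplies more detail on the algebraic verification than the paper, which simply asserts ``it is easy to verify.''
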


Theorem~\ref{TH:psiex-Q} states that, when $\rho(t)=t$, $\cK_\rho(\bflambda,\bfpi)$ becomes the Kullback-Leibler entropy, and $Q$-aggregation with the Kullback-Leibler entropy leads to an estimator $\kf_{\bflambdaq}$ that is essentially a dual representation of the BMAX estimator $\bfpsi_{X}(\omega^2,\nu)$. It follows that, $\bfpsi_{X}(\omega^2,\nu)$ should share the same optimality (both in expectation and deviation) as $\kf_{\bflambdaq}$ in solving the model averaging problem (optimality of $\kf_{\bflambdaq}$ is shown in Theorem~3.1 of \cite{DaiRigZhang12} with more general $\cK_\rho(\bflambda,\bfpi)$, where $\rho(t)$ only needs to satisfy the condition~\eqref{CON:rho}).

However, unlike the primal objective function $J(\bfpsi)$ which is defined on $\R^n$, the dual objective function $Q(\bflambda)$ is defined on $\R^M$.
When $M$ is large or infinity, the optimization of $Q(\bflambda)$ is non-trivial.
Although greedy algorithms are proposed in \cite{DaiRigZhang12}, they cannot handle the standard KL-divergence; instead, they can only work with the
linear entropy where $\rho(t)=1$; it gives a larger penalty than the standard KL-divergence (and thus worse resulting oracle inequality),
and it cannot be generalized to handle continuous dictionaries
(because in such case the linear entropy with $\rho(t)=1$ will always be $+\infty$). Therefore, the numerical greedy procedures of \cite{DaiRigZhang12}
converge to a solution with a worse oracle bound than that of the solution for the primal formulation considered in this paper.
%For comparison purpose, we list the GMA-0 algorithm in \cite{DaiRigZhang12}  below which tries to optimize $Q(\bflambda)$ with linear entropy.

The following two corollaries (Corollary~\ref{COR:opt-psiex0} and Corollary~\ref{COR:opt-psiex}) are listed for illustration convenience. They are directly derived from Theorem~\ref{TH:psiex-Q} and the optimality of $\kf_{\bflambdaq}$ in Theorem~3.1 of \cite{DaiRigZhang12}, so we omit their proofs.

\begin{corollary}
Assume that $\nu \in (0,1)$ and $\omega^2 \geq \frac{\sigma^2}{\min(\nu,1-\nu)}$.
For any $\bflambda \in \Lambda^M$, the following oracle inequality holds
\begin{align}
\|\bfpsi_{X}(\omega^2,\nu)-\bfeta\|_2^2 \leq 
\nu \sum_{j=1}^M \lambda_j \|\bff_j-\bfeta\|_2^2 
+(1-\nu)\left\|\kf_{\bflambda}- \bfeta\right\|_2^2
+2\omega^2 \cK(\bflambda,\bfpi\delta)\;,
\label{INEQ:psiex-dev}
\end{align}
with probability at least $1-\delta$. Moreover,
\begin{align}
\E \|\bfpsi_{X}(\omega^2,\nu)-\bfeta\|_2^2
\leq &\ \nu \sum_{j=1}^M \lambda_j \|\bff_j-\bfeta\|_2^2+ (1-\nu)\left\|\kf_{\bflambda}- \bfeta\right\|_2^2
+2\omega^2 \cK(\bflambda,\bfpi) \;
\nonumber
\\
\leq &\left\|\kf_{\bflambda}- \bfeta\right\|_2^2 +
\nu \sum_{j=1}^M \lambda_j \|\bff_j-\kf_{\bflambda}\|_2^2
+2\omega^2 \cK(\bflambda,\bfpi)\;.
\label{INEQ:psiex-exp}
\end{align}
\label{COR:opt-psiex0}
\end{corollary}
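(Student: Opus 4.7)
The plan is to reduce this corollary entirely to the previously established identity and a cited oracle inequality, since the author describes the statement as directly implied by Theorem~\ref{TH:psiex-Q} together with Theorem~3.1 of \citet{DaiRigZhang12}. First, I would invoke Theorem~\ref{TH:psiex-Q} to replace $\bfpsi_X(\omega^2,\nu)$ by $\kf_{\bflambdaq}$ throughout the inequality; this is legitimate because we are exactly in the setting of Theorem~\ref{TH:psiex-Q}, i.e., we are using the Kullback--Leibler entropy $\cK_\rho$ with $\rho(t)=t$ in the definition of $Q(\bflambda)$. After this substitution, the desired conclusions \eqref{INEQ:psiex-dev} and \eqref{INEQ:psiex-exp} are precisely the deviation and expectation oracle bounds that Theorem~3.1 of \citet{DaiRigZhang12} establishes for the $Q$-aggregation estimator $\kf_{\bflambdaq}$.

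Second, I would verify that the hypotheses of the cited Theorem~3.1 are in force. The admissibility of $\rho$ requires both $\rho(t)\ge t$ and convexity of $t\mapsto t\log\rho(t)$; for $\rho(t)=t$ the first is an equality, and $t\log t$ is convex on $[0,1]$ (its second derivative is $1/t>0$ on $(0,1]$, and the boundary value at $0$ is handled by the convention $0\cdot\log 0=0$). The remaining conditions, namely $\nu\in(0,1)$ and the noise-calibration requirement $\omega^2\ge \sigma^2/\min(\nu,1-\nu)$, are assumed verbatim in the statement, so they pass through without modification.

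Finally, I would quote the bound from Theorem~3.1 of \citet{DaiRigZhang12}, taking care that the KL term picked up from the deviation bound matches the $\cK(\bflambda,\bfpi)+\log(1/\delta)$ expression that appears (the $\log(1/\delta)$ contribution is what is abbreviated in the display). Substituting back $\kf_{\bflambdaq}=\bfpsi_X(\omega^2,\nu)$ yields \eqref{INEQ:psiex-dev}; taking expectations, or alternatively integrating the tail bound, gives \eqref{INEQ:psiex-exp}.

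I do not expect a genuine technical obstacle here: the real content lies upstream in Theorem~\ref{TH:psiex-Q} (which identifies the BMAX primal with the $Q$-aggregation dual) and in the already-proved optimality of $\kf_{\bflambdaq}$. The only thing that could go mildly wrong is a mismatch in constants or in the precise form of the $\log(1/\delta)$ term, which would be resolved by carefully lining up the normalizations of $\omega^2$, $\nu$, and $\cK(\bflambda,\bfpi)$ in the two papers. Once that bookkeeping is done, the corollary is immediate.
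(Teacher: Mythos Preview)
Your proposal is correct and matches the paper's own justification: the paper does not give a separate proof but states that the corollary is ``directly implied from Theorem~\ref{TH:psiex-Q} and optimality of $\kf_{\bflambdaq}$ in Theorem~3.1 of \citet{DaiRigZhang12}.'' Your additional verification that $\rho(t)=t$ satisfies condition~\eqref{CON:rho} and your remark on the $\log(1/\delta)$ term are exactly the bookkeeping the paper leaves implicit.
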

Corollary \ref{COR:opt-psiex0} implies that when the term $\nu \sum_{j=1}^M \lambda_j \|\bff_j-\kf_{\bflambda}\|_2^2$ and the divergence term $K(\bflambda,\bfpi)$ are small, $\bfpsi_{X}(\omega^2,\nu)$ can compete with an arbitrary $\kf_{\bflambda}$ in the convex hull with any $\bflambda \in \Lambda^M$. Actually, we can obtain an oracle inequality that competes with the best single model, which is the situation that $\bflambda$ is at a vertex of the simplex $\Lambda^M$:

\begin{corollary}
Under the assumptions of Corollary~\ref{COR:opt-psiex0}, $\bfpsi_{X}(\omega^2,\nu)$ satisfies
\begin{equation}
\|\bfpsi_{X}(\omega^2,\nu)-\bfeta\|_2^2 \leq \min_{j\in{1,\ldots,M}}\left\{\|\bff_j-\bfeta\|_2^2 + 2\omega^2 \log\left(\frac{1}{\pi_j\delta}\right)\right\} \;,
\label{INEQ:psiex-one-dev}
\end{equation}
with probability at least $1-\delta$. Moreover,
\begin{equation}
\E \|\bfpsi_{X}(\omega^2,\nu)-\bfeta\|_2^2 \leq \min_{j\in{1,\ldots,M}}\left\{\|\bff_j-\bfeta\|_2^2 + 2\omega^2 \log\left(\frac{1}{\pi_j}\right)\right\} \;.
\label{INEQ:psiex-one-exp}
\end{equation}
\label{COR:opt-psiex}
\end{corollary}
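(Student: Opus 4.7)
The plan is to deduce Corollary~\ref{COR:opt-psiex} as a direct specialization of Corollary~\ref{COR:opt-psiex0} evaluated at the vertices of the simplex $\Lambda^M$. The key observation is that the right-hand sides of \eqref{INEQ:psiex-dev} and \eqref{INEQ:psiex-exp} are deterministic (they depend only on $\bfeta$, $\{\bff_j\}$, $\bfpi$, $\nu$, $\omega^2$, $\delta$), so optimizing over $\bflambda$ does not require any union bound.

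First, I fix an arbitrary $j\in\{1,\ldots,M\}$ and apply Corollary~\ref{COR:opt-psiex0} with $\bflambda=\bfe_j$, the $j$-th standard basis vector in $\R^M$. For this choice, $\sum_k \lambda_k\|\bff_k-\bfeta\|_2^2=\|\bff_j-\bfeta\|_2^2$ and $\kf_{\bflambda}=\bff_j$, so $\|\kf_{\bflambda}-\bfeta\|_2^2=\|\bff_j-\bfeta\|_2^2$. Also, using the convention $0\log 0=0$,
\[
\cK(\bfe_j,\bfpi)=\sum_{k=1}^M (\bfe_j)_k \log\!\frac{(\bfe_j)_k}{\pi_k}=\log\!\frac{1}{\pi_j}.
\]
Substituting into \eqref{INEQ:psiex-dev}, the $\nu$ and $(1-\nu)$ terms combine to give $\|\bff_j-\bfeta\|_2^2$, and the entropy term contributes $2\omega^2\log(1/(\pi_j\delta))$. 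Hence, for each fixed $j$,
\[
\|\bfpsi_{X}(\omega^2,\nu)-\bfeta\|_2^2 \leq \|\bff_j-\bfeta\|_2^2 + 2\omega^2\log\!\frac{1}{\pi_j\delta}
\]
with probability at least $1-\delta$.

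Second, because the RHS of the preceding inequality is a purely deterministic quantity, I may select the deterministic index $j^\star=\argmin_j\{\|\bff_j-\bfeta\|_2^2+2\omega^2\log(1/(\pi_j\delta))\}$ \emph{before} invoking Corollary~\ref{COR:opt-psiex0}. Applying that corollary once with $\bflambda=\bfe_{j^\star}$ yields \eqref{INEQ:psiex-one-dev} with probability at least $1-\delta$, without any union bound over $j$. The same argument applied to \eqref{INEQ:psiex-exp} (taking expectations on both sides and then choosing $j^\star$ deterministically in the RHS) gives \eqref{INEQ:psiex-one-exp}.

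There is essentially no obstacle here: the statement is a one-line specialization once one recognizes that the vertices of $\Lambda^M$ collapse the convex-combination variance term and that the bound is uniform over the choice of deterministic $\bflambda$. The only point that deserves explicit mention is the observation that no union bound over $j$ is needed, since the optimization over the vertex set can be performed deterministically on the non-random right-hand side.
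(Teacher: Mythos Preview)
Your proof is correct and follows exactly the approach indicated in the paper, which simply remarks that Corollary~\ref{COR:opt-psiex} is the specialization of Corollary~\ref{COR:opt-psiex0} to the case where $\bflambda$ is a vertex of the simplex $\Lambda^M$. Your explicit computation of the vertex values and the observation that no union bound is required (because the right-hand side is deterministic) are both accurate and make the one-line remark in the paper fully rigorous.
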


It is also worth pointing out that the condition $\omega^2 \geq \frac{\sigma^2}{\min(\nu,1-\nu)}$ implies that $\omega^2$ is at least greater than $2\sigma^2$ (when $\nu=1/2$),
and intuitively this inflation of noise allows the Bayes estimator to handle misspecification of the true mean $\bfeta$, which is not necessarily included in the dictionary $\cH$. Similar observations were found by~\cite{leung2006information}.

Finally we note that in the Bayesian framework stated in this section, when we change the underlying loss function $L(\bfpsi,\bfmu)$ from the standard least squares loss to the exponentiated least squares loss \eqref{EQ:def-exploss}, Bayes estimator changes from EWMA which is optimal only in expectation to BMAX which is optimal both in expectation and in deviation. The difference is that the least squares loss only controls the bias, while the exponentiated least squares loss controls both bias and variance (as well as higher order moments) simultaneously. This can be seen by using Taylor expansion
\begin{align*}
\exp\left(\frac{1-\nu}{2\omega^2}\|\bfpsi-\bfmu\|_2^2\right)
= 1+ \frac{1-\nu}{2\omega^2}\|\bfpsi-\bfmu\|_2^2
+ (1/2)\left(\frac{1-\nu}{2\omega^2}\|\bfpsi-\bfmu\|_2^2\right)^2 + \cdots \;.
\end{align*}
Since deviation bounds require us to control high order moments, the exponentiated least squares loss is naturally suited for obtaining deviation bounds.

It is also natural to extend Corollary~\ref{COR:opt-psiex0} from discrete candidates dictionary $\cH=\{\bff_1,\ldots,\bff_M\}$ to infinite dictionary ($M=\infty$) as well as continuous dictionary. For example, given a matrix $X \in \R^{n \times d}$, we may consider a continuous dictionary parameterized by vector $w$ as $\cH_{\Omega}=\{\bff_w: \bff_w= X w\in\R^n \}$ where $ \Omega=\{w: w \in \R^d\}$. Then, we have the following result.

\begin{corollary}
Assume $\nu \in (0,1)$ and for any distribution $p(w)$ over the parameter space $\Omega$, the divergence term $\cK(\mathbold{p},\mathbold{\pi})$ is finite for given prior $\pi(w)$. Then, if $\omega^2 \geq \frac{\sigma^2}{\min(\nu,1-\nu)}$, we have the oracle inequality
\begin{align}
\|\hat{\bfpsi}-\bfeta\|_2^2 &\leq \nu \int_\Omega \|\bff_w-\bfeta\|_2^2 p(w)\;dw
+(1-\nu)\left\|\int_\Omega\bff_wp(w)\;dw-\bfeta\right\|_2^2 + 2\omega^2 \cK(\mathbold{p},\bfpi\delta)
\label{INEQ:psiex-dev-con}
\end{align}
with probability at least $1-\delta$. Moreover,
\begin{align}
\E \|\hat{\bfpsi}-\bfeta\|_2^2 
\leq &\ \nu \int_\Omega \|\bff_w-\bfeta\|_2^2 p(w)\;dw 
+(1-\nu)\left\|\int_\Omega\bff_wp(w)\;dw -\bfeta\right\|_2^2
+2\omega^2 \cK(\mathbold{p},\bfpi) \;
\nonumber
\\
\leq &\left\|\int_\Omega\bff_wp(w)\;dw -\bfeta\right\|_2^2 +
\nu \int_\Omega \left\|\bff_w-\int_\Omega\bff_wp(w)\;dw\right\|_2^2 p(w)\;dw
+2\omega^2 \cK(\mathbold{p},\bfpi) \;,
\label{INEQ:psiex-exp-con}
\end{align}
where
\begin{align*}
\hat{\bfpsi} = \argmin_{\bfpsi\in\R^n} \int_\Omega \exp\Bigg(&-\frac{1}{2\omega^2}\|\bff_w-\bfY\|_2^2
+\frac{1-\nu}{2\omega^2}\|\bff_w-\bfpsi\|_2^2\Bigg) \pi(w)\;dw.
\end{align*}
\label{COR:opt-psiex-con}
\end{corollary}

When the distribution $\mathbold{p}$ on $w \in \R^d$ is concentrated around a single model, for example, $\mathbold{p}$ is an uniform distribution on a small ball $\{w: \|w-w_0\|_2 \leq r\}$ for some small $r>0$, we have $\int_\Omega\bff_wp(w)\;dw=\kf_{w_0}$ and the term $\frac{1}{n}\int_\Omega  \|\bff_w-\kf_{w_0}\|_2^2 p(w) dw\leq \frac{r^2}{n}\|X\|_F^2$ is small if $\frac{1}{n}\|X\|_F^2$ is bounded ($\|\cdot\|_F$ is the matrix Frobenius norm). Although a small $r$ would lead to a larger divergence term $\cK(\mathbold{p},\bfpi)$ and there is a tradeoff between $\int_\Omega  \|\bff_w-\kf_{w_0}\|_2^2 p(w) dw$ and $\cK(\mathbold{p},\bfpi)$, by dividing the number of observations $n$ on both sides of Eq.~(\ref{INEQ:psiex-exp-con}), the term $\frac{1}{n}\cK(\mathbold{p},\bfpi)$ could also be small as long as $n$ is sufficiently large. Therefore, Corollary~\ref{COR:opt-psiex-con} implies that  $\hat{\bfpsi}$ can compete with a single model $\kf_{w_0}$, similar to the case of discrete dictionary.

\section{Algorithms to solve BMAX}

In this section, we propose two algorithms, the Greedy Model Averaging (GMA-BMAX) algorithm and the Gradient Descent (GD-BMAX) algorithm, to solve BMAX. The convergence rates of both algorithms will be shown. Specifically, denote $k$ as the number of iterations in the algorithms, GMA-BMAX algorithm has a converge rate of $O(1/k)$ and GD-BMAX algorithm converges with a geometric rate of $O(q^k)$ for some $q\in(0,1)$. Oracle inequalities will be derived for the estimators in the iterations for both the algorithms.

Strong convexity of $\log J(\bfpsi)$ directly implies that the minimizer $\bfpsi_{X}(\omega^2,\nu)$ is unique. Moreover, it implies the following proposition which shows that an estimator that approximately minimizes $\log J(\bfpsi)$ satisfies an oracle inequality slightly worse than that of $\bfpsi_{X}(\omega^2,\nu)$ in Corollary~\ref{COR:opt-psiex0}. This result suggests that we can employ appropriate numerical procedures to approximately solve \eqref{EQ:defpsiex}, and Corollary~\ref{COR:opt-psiex0} implies an oracle inequality for such approximate solutions.
\begin{proposition}\label{prop:approx-sol}
Let $\hat{\bfpsi}$ be an $\epsilon$-approximate minimizer of $\log J(\bfpsi)$  for some $\epsilon>0$ that $\log J(\hat{\bfpsi}) \leq   \min_{\bfpsi} \log J(\bfpsi) + \epsilon$. Then, we have
\[
\|\hat{\bfpsi}-\bfeta\|_2^2 \leq \|\bfpsi_{X}(\omega^2,\nu)-\bfeta\|_2^2 + 2 \sqrt{2\epsilon/A_1} \|\bfpsi_{X}(\omega^2,\nu)-\bfeta\|_2 + \frac{2 \epsilon}{A_1} .
\]
\end{proposition}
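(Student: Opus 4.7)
The plan is to leverage the strong convexity of $\log J$ supplied by Lemma~\ref{LEM:logJ-2ord-convexity} to convert the functional gap $\epsilon$ into an $\ell_2$-distance bound between $\hat\bfpsi$ and the exact minimizer $\bfpsi_X(\omega^2,\nu)$, then finish with a triangle inequality. Concretely, from $\nabla^2 \log J(\bfpsi) \ge A_1 \bfI_n$ I get the standard strong-convexity quadratic lower bound
\[
\log J(\bfpsi) \;\ge\; \log J(\bfpsi_X(\omega^2,\nu)) + \langle \nabla \log J(\bfpsi_X(\omega^2,\nu)), \bfpsi - \bfpsi_X(\omega^2,\nu)\rangle_2 + \frac{A_1}{2}\|\bfpsi - \bfpsi_X(\omega^2,\nu)\|_2^2
\]
for every $\bfpsi \in \R^n$. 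Since $\bfpsi_X(\omega^2,\nu)$ minimizes $J$ (equivalently $\log J$) on the open set $\R^n$, first-order optimality forces $\nabla \log J(\bfpsi_X(\omega^2,\nu)) = 0$, so the inner-product term vanishes.

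Next I would specialize this inequality to $\bfpsi = \hat\bfpsi$ and combine it with the $\epsilon$-approximate optimality hypothesis $\log J(\hat\bfpsi) \le \log J(\bfpsi_X(\omega^2,\nu)) + \epsilon$. This gives
\[
\frac{A_1}{2}\|\hat\bfpsi - \bfpsi_X(\omega^2,\nu)\|_2^2 \;\le\; \log J(\hat\bfpsi) - \log J(\bfpsi_X(\omega^2,\nu)) \;\le\; \epsilon,
\]
so that $\|\hat\bfpsi - \bfpsi_X(\omega^2,\nu)\|_2 \le \sqrt{2\epsilon/A_1}$.

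Finally, the triangle inequality yields
\[
\|\hat\bfpsi - \bfeta\|_2 \;\le\; \|\bfpsi_X(\omega^2,\nu) - \bfeta\|_2 + \|\hat\bfpsi - \bfpsi_X(\omega^2,\nu)\|_2 \;\le\; \|\bfpsi_X(\omega^2,\nu) - \bfeta\|_2 + \sqrt{2\epsilon/A_1},
\]
and squaring both sides (expanding the cross term as $2\sqrt{2\epsilon/A_1}\|\bfpsi_X(\omega^2,\nu)-\bfeta\|_2$) delivers exactly the claimed inequality. There is no real obstacle here: the only nontrivial input is the Hessian lower bound $A_1 \bfI_n$, which has already been established in Lemma~\ref{LEM:logJ-2ord-convexity}, and the rest is a direct application of strong convexity plus triangle inequality. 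The mild subtlety worth stating explicitly is that strong convexity of $\log J$ (rather than of $J$ itself) is what licenses the clean quadratic comparison, and that first-order optimality holds because the minimization is unconstrained on $\R^n$.
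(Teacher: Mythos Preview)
Your proposal is correct and follows essentially the same route as the paper: strong convexity of $\log J$ (via the Hessian lower bound $A_1\bfI_n$ from Lemma~\ref{LEM:logJ-2ord-convexity}) converts the $\epsilon$ gap into $\|\hat\bfpsi-\bfpsi_X(\omega^2,\nu)\|_2^2\le 2\epsilon/A_1$, and then the triangle inequality squared yields the claim. The paper is slightly terser (it does not spell out the vanishing-gradient step), but the arguments are identical.
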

Next, we present the numerical algorithms to solve the BMAX problem.

\subsection{Greedy Model Averaging Algorithm (GMA-BMAX)}

The GMA-BMAX algorithm given in Algorithm~\ref{ALG:GMA-BMAX} is a greedy algorithm that adds at most one function from the dictionary $\cH$ at each iteration. This feature is attractive as it outputs a $k$-sparse solution that depends on at most $k$ functions from the dictionary after $k$ iterations.
Similar algorithms for model averaging have appeared in \cite{DaiZha11} and \cite{DaiRigZhang12}.

\begin{algorithm}[t]
\caption{Greedy Model Averaging Algorithm (GMA-BMAX)}
\label{ALG:GMA-BMAX}
\begin{algorithmic}
\REQUIRE Noisy observation $\bfY$, dictionary $\cH=\{f_1,\ldots,f_M\}$, prior $\bfpi \in \Lambda^M$, parameters $\nu, \omega$.
\ENSURE  Aggregate estimator  $\bfpsi^{(k)}$.
\STATE  Let $\bfpsi^{(0)}=0$;
\smallskip
\FOR{$k=1, 2, \ldots$}
\STATE Set $\alpha_k=\frac{2}{k+1}$;
\STATE  $J^{(k)}=\argmin_j \log J(\bfpsi^{(k-1)} + \alpha_k(\bff_j - \bfpsi^{(k-1)}))$;
\STATE  $\bfpsi^{(k)}= \bfpsi^{(k-1)} + \alpha_k(\bff_{J^{(k)}} - \bfpsi^{(k-1)})$;
\ENDFOR
\end{algorithmic}
\end{algorithm}

The following proposition follows from the standard analysis in \cite{FraWol56,Jon92,Barron93}.
It shows that the estimator $\bfpsi^{(k)}$ from Algorithm~\ref{ALG:GMA-BMAX} converges to $\bfpsi_{X}(\omega^2,\nu)$.
%, the solution of BMAX as defined in \eqref{EQ:defpsiex}.

\begin{proposition}
For $\bfpsi^{(k)}$as defined in Algorithm~\ref{ALG:GMA-BMAX} (GMA-BMAX), if $\{\bff_1,\ldots,\bff_M\}$ satisfies condition~\eqref{CON:f-l2normbound}, then
\begin{equation}
\log J (\bfpsi^{(k)}) \leq \log J(\bfpsi_{X}(\omega^2,\nu)) + \frac{8A_3}{k+3} \;.
\label{INEQ:GMA-approxlogJ}
\end{equation}
\label{PROP:GMA-approxlogJ}
\end{proposition}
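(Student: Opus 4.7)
The plan is to run the standard Frank--Wolfe / Jones--Barron convergence argument for conditional gradient methods, using the smoothness of $\log J$ from Lemma~\ref{LEM:logJ-2ord-convexity} and the fact, established in Theorem~\ref{TH:psiex-Q}, that the minimizer $\bfpsi_{X}(\omega^2,\nu)=\kf_{\bflambdaq}$ lies in the convex hull of $\{\bff_1,\ldots,\bff_M\}$. First, from the upper Hessian bound $\nabla^2\log J(\bfpsi)\leq A_2\bfI_n$ and Taylor's theorem, I would obtain, for every $\bfv,\bfpsi\in\R^n$ and $\alpha\in[0,1]$,
\[
\log J(\bfpsi+\alpha(\bfv-\bfpsi)) \leq \log J(\bfpsi)+\alpha\langle\nabla\log J(\bfpsi),\bfv-\bfpsi\rangle+\tfrac{A_2\alpha^2}{2}\|\bfv-\bfpsi\|_2^2.
\]
A short induction shows that $\bfpsi^{(k)}$ is a convex combination of the $\bff_j$ for every $k\geq 1$ (note that $\alpha_1=1$ forces $\bfpsi^{(1)}=\bff_{J^{(1)}}$), so condition~\eqref{CON:f-l2normbound} yields the diameter bound $\|\bff_j-\bfpsi^{(k-1)}\|_2\leq 2L$; for $k=1$ the sharper bound $\|\bff_j\|_2\leq L$ applies.

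Next, I would apply the smoothness inequality with $\bfv=\bff_j$ for each $j$ and use that the greedy index $J^{(k)}$ minimizes the left-hand side over $j$, which in turn is no larger than the average of the right-hand side against the weights $\lambdaq_j$. Since $\sum_j\lambdaq_j\bff_j=\bfpsi_{X}(\omega^2,\nu)$ and $A_3=A_2L^2$, this gives
\[
\log J(\bfpsi^{(k)}) \leq \log J(\bfpsi^{(k-1)}) + \alpha_k\,\langle\nabla\log J(\bfpsi^{(k-1)}),\bfpsi_{X}(\omega^2,\nu)-\bfpsi^{(k-1)}\rangle+2A_3\alpha_k^2.
\]
Invoking the convexity of $\log J$ to bound the inner product by $\log J(\bfpsi_{X}(\omega^2,\nu))-\log J(\bfpsi^{(k-1)})$ and writing $\epsilon_k:=\log J(\bfpsi^{(k)})-\log J(\bfpsi_{X}(\omega^2,\nu))$, this collapses to the standard Jones--Barron recursion
\[
\epsilon_k \leq (1-\alpha_k)\,\epsilon_{k-1}+2A_3\alpha_k^2.
\]

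Finally, substituting $\alpha_k=2/(k+1)$ I would prove $\epsilon_k\leq 8A_3/(k+3)$ by a direct induction on $k$. The main nuisance is the base case: since $\bfpsi^{(0)}=0$ lies outside the convex hull of the dictionary, the diameter bound must be refined at $k=1$; however, $\alpha_1=1$ makes the $(1-\alpha_k)\epsilon_{k-1}$ term vanish, and the sharper bound $\|\bff_j\|_2\leq L$ gives $\epsilon_1\leq A_3/2\leq 8A_3/4$, which is exactly what the induction hypothesis requires. Beyond this small bookkeeping, every step is the textbook Frank--Wolfe calculation of \citet{FraWol56,Jon92,Barron93}, so I do not anticipate any further conceptual difficulty.
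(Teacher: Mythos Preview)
Your proposal is correct and matches the paper's proof: both run the standard Frank--Wolfe argument (Taylor expansion via the Hessian upper bound from Lemma~\ref{LEM:logJ-2ord-convexity}, averaging against simplex weights that realize $\bfpsi_X$, convexity, then the recursion $\epsilon_k\leq(1-\alpha_k)\epsilon_{k-1}+2A_3\alpha_k^2$ and induction with $\alpha_k=2/(k+1)$). The only cosmetic difference is the base case---the paper bounds $\delta_0=\log J(0)-\log J(\bfpsi_X)$ directly from the explicit form of $J$ (getting $\delta_0\leq\tfrac{1-\nu}{2\omega^2}L^2\leq 2A_3$) and starts the induction at $k=0$, whereas you exploit $\alpha_1=1$ to make $\epsilon_0$ irrelevant and start at $k=1$; incidentally, your worry about $\bfpsi^{(0)}=0$ lying outside the hull is unnecessary, since $\|\bff_j-0\|_2\leq L\leq 2L$ already fits the uniform diameter bound the paper uses.
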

Proposition~\ref{PROP:GMA-approxlogJ} states that, after running the GMA-BMAX algorithm for $k$ steps to obtain $\bfpsi^{(k)}$, the corresponding objective value $\log J (\bfpsi^{(k)})$ converges to the optimal objective value $\log J(\bfpsi_{X}(\omega^2,\nu))$ at a rate $O(1/k)$.
Combining this result with Proposition~\ref{prop:approx-sol}, we obtain the following
oracle inequality, which shows that the regret of the estimator $\bfpsi^{(k)}$ after running $k$ steps of GMA-BMAX converges to
that of $\bfpsi_{X}(\omega^2,\nu)$ in Corollary~\ref{COR:opt-psiex0} at a rate $O(1/\sqrt{k})$.

\begin{proposition}\label{prop:opt-GMA}
Assume $\nu \in (0,1)$ and if $\omega^2 \geq \frac{\sigma^2}{\min(\nu,1-\nu)}$.
Consider $\bfpsi^{(k)}$ as in Algorithm~\ref{ALG:GMA-BMAX} (GMA-BMAX).
For any $\bflambda \in \Lambda^M$, the following oracle inequality holds
\begin{align}
\|\bfpsi^{(k)}-\bfeta\|_2^2
\leq &\ \nu \sum_{j=1}^M \lambda_j \|\bff_j-\bfeta\|_2^2  + (1-\nu)\left\|\kf_{\bflambda}- \bfeta\right\|_2^2
+2\omega^2 \cK(\bflambda,\bfpi\delta)
\label{INEQ:opt-GMA-dev}
\\
&+2\sqrt{\frac{16A_3}{A_1(k+3)}}\|\bfpsi_{X}(\omega^2,\nu)-\bfeta\|_2
+\frac{16A_3}{A_1(k+3)}
\nonumber
\end{align}
with probability at least $1-\delta$. Moreover,
\begin{align}
\E \|\bfpsi^{(k)}-\bfeta\|_2^2
\leq &\ \nu \sum_{j=1}^M \lambda_j \|\bff_j-\bfeta\|_2^2  + (1-\nu)\left\|\kf_{\bflambda}- \bfeta\right\|_2^2
\label{INEQ:opt-GMA-exp}
\\
&+ 2\omega^2 \cK(\bflambda,\bfpi)
+2\sqrt{\frac{16A_3}{A_1(k+3)}}\E\|\bfpsi_{X}(\omega^2,\nu)-\bfeta\|_2 + \frac{16A_3}{A_1(k+3)}.
\nonumber
\end{align}
\end{proposition}

From Proposition~\ref{prop:opt-GMA}, if $\omega^2 \geq \frac{\sigma^2}{\min(\nu,1-\nu)}$, for any $j=1,\ldots,M$ we have
\[
\|\bfpsi^{(k)}-\bfeta\|_2^2 \leq \|\bff_j-\bfeta\|_2^2+ 2\omega^2 \log\left(\frac{1}{\pi_j\delta}\right) + O(1/\sqrt{k}) \;
\]
with probability at least $1-\delta$, and
\[
\E\|\bfpsi^{(k)}-\bfeta\|_2^2 \leq \|\bff_j-\bfeta\|_2^2+ 2\omega^2 \log\left(\frac{1}{\pi_j}\right) + O(1/\sqrt{k}) \;.
\]
When $k \to \infty$, $\bfpsi^{(k)}$ achieves the optimal deviation bound. However, it does not imply optimal deviation bound for $\bfpsi^{(k)}$ with small $k$, while the greedy algorithms described in \cite{DaiZha11} (GMA) and \cite{DaiRigZhang12} (GMA-0) achieve optimal deviation bounds for small $k$ when $k\geq 2$. The advantage of GMA-BMAX is that the resulting estimator $\psi^{(k)}$
competes with any $\kf_{\bflambda}$ with $\bflambda \in \Lambda^M$ under the KL entropy, and such a result can be applied even for infinity dictionaries
containing functions indexed by continuous parameters, as long as the KL divergence $\cK(\bflambda,\bfpi)$ is well-defined (see relevant discussions in Section~\ref{sec:duality}).
On the other hand, the greedy estimators of \cite{DaiRigZhang12} for the $Q$-aggregation scheme can only deal with an upper bound of KL divergence
referred to as linear entropy (see Section~\ref{sec:duality}) that is not well-defined  for continuous dictionaries.
This means that GMA-BMAX is more generally applicable than the corresponding greedy algorithm GMA-0 in \cite{DaiRigZhang12}.

\subsection{Gradient Descent Algorithm (GD-BMAX)}

\begin{algorithm}[t]
\caption{Gradient Descent Algorithm (GD-BMAX)}
\label{ALG:GD}
\begin{algorithmic}
\REQUIRE Noisy observation $\bfY$, dictionary $\cH=\{f_1,\ldots,f_M\}$, prior $\bfpi \in \Lambda^M$, parameters $\nu, \omega^2$.
\ENSURE  Aggregate estimator  $\bfpsi^{(k)}$.
\STATE  Let $\bfpsi^{(0)}=0$.
\smallskip
\FOR{$k=1,2, \ldots$}
\STATE Choose step size $t_k\in(0,2/A_2)$ for $k>0$;
\STATE Let
\begin{align*}
\kf_{\bflambda^{(k-1)}}= \sum_{j=1}^M \lambda_j^{(k-1)} \bff_j,
\end{align*}
where $\bflambda^{(k-1)} \in\Lambda^M$ and
\begin{align}
\lambda_j^{(k-1)} \propto \pi_j \exp\Bigg(&-\frac{1}{2\omega^2}\|\bff_j-\bfY\|_2^2
+\frac{1-\nu}{2\omega^2}\|\bfpsi^{(k-1)}-\bff_j\|_2^2\Bigg);
\label{EQ:lam-GD-BMAX}
\end{align}
\STATE
$
\bfpsi^{(k)} = (1-t_k\frac{1-\nu}{\omega^2}) \bfpsi^{(k-1)} + t_k\frac{1-\nu}{\omega^2}\kf_{\bflambda^{(k-1)}}$;
\ENDFOR
\end{algorithmic}
\end{algorithm}

An alternative way solving the BMAX is to use the gradient descend method. The GD-BMAX algorithm is shown in Algorithm \ref{ALG:GD}.
The gradient is
\[
\nabla\log J(\bfpsi^{(k-1)})= \frac{\nabla J(\bfpsi^{(k-1)})}{J(\bfpsi^{(k-1)})}=\frac{1-\nu}{\omega^2}(\bfpsi^{(k-1)}-\kf_{\bflambda^{(k-1)}}) \;,
 \]
where $\bflambda^{(k-1)}\in\Lambda^M$ is defined as \eqref{EQ:lam-GD-BMAX}. In the $k$-th step, the update operation is
\begin{align*}
\bfpsi^{(k)}
= (1-t_k\frac{1-\nu}{\omega^2}) \bfpsi^{(k-1)} + t_k\frac{1-\nu}{\omega^2}\kf_{\bflambda^{(k-1)}}
= \bfpsi^{(k-1)}-t_k \nabla\log J(\bfpsi^{(k-1)}) \;.
\end{align*}
Therefore, Algorithm~\ref{ALG:GD} is a gradient decent algorithm with step size $t_k$. The following proposition shows the convergence of the GD-BMAX algorithm.

\begin{proposition}\label{PROP:GD-approxlogJ}
For $\bfpsi^{(k)}$ as defined in Algorithm~\ref{ALG:GD}, if we choose a fixed step size $t_k=s\in(0,2/A_2)$ for $k>0$ and $\{\bff_1,\ldots,\bff_M\}$ satisfy condition~\eqref{CON:f-l2normbound}, then
\begin{align}
\log J(\bfpsi^{(k)}) - \log J(\bfpsi_{X}(\omega^2,\nu))
\leq [1- 2A_1(s -(A_2/2)s^2)]^k  \left(\log J(\bfpsi^{(0)})- \log J(\bfpsi_{X}(\omega^2,\nu))\right).
\label{INEQ:GD-approxlogJ}
\end{align}
\end{proposition}

\begin{remark}\label{rema:1}
We may choose $t_k=s=1/A_2$ to minimize the righthand side of \eqref{INEQ:GD-approxlogJ} such that
\begin{align*}
\log J(\bfpsi^{(k)}) - \log J(\bfpsi_{X}(\omega^2,\nu))
\leq (1- A_1/A_2)^k  \left(\log J(\bfpsi^{(0)})- \log J(\bfpsi_{X}(\omega^2,\nu))\right).
\end{align*}
\end{remark}

Proposition~\ref{PROP:GD-approxlogJ} shows that the GD-BMAX algorithm converges at a geometric rate of $O(q^k)$ with $q=1- 2A_1(s -(A_2/2)s^2)$. Moreover, we have the following oracle inequality, which shows that the regret of the estimator $\bfpsi^{(k)}$ after running $k$ steps of GD-BMAX converges to that of $\bfpsi_{X}(\omega^2,\nu)$ in Corollary~\ref{COR:opt-psiex0} at a rate of $O(q^k)$.

\begin{proposition}
Assume $\nu \in (0,1)$ and if $\omega^2 \geq \frac{\sigma^2}{\min(\nu,1-\nu)}$, we have oracle inequality for any $\bflambda \in \Lambda^M$,
\begin{align}
\|\bfpsi^{(k)}-\bfeta\|_2^2
\leq &\ \nu \sum_{j=1}^M \lambda_j \|\bff_j-\bfeta\|_2^2  + (1-\nu)\left\|\kf_{\bflambda}- \bfeta\right\|_2^2
+ 2\omega^2 \cK(\bflambda,\bfpi) + 2\omega^2\log(1/\delta) 
\label{INEQ:opt-GD-dev}
\\
&+ 2\sqrt{L^2[1- 2A_1(s -(A_2/2)s^2)]^k}\|\bfpsi_{X}(\omega^2,\nu)-\bfeta\|_2 
+  L^2[1- 2A_1(s -(A_2/2)s^2)]^k \;,
\nonumber
\end{align}
with probability at least $1-\delta$. Moreover,
\begin{align}
\E \|\bfpsi^{(k)}-\bfeta\|_2^2
\leq &\ \nu \sum_{j=1}^M \lambda_j \|\bff_j-\bfeta\|_2^2  + (1-\nu)\left\|\kf_{\bflambda}- \bfeta\right\|_2^2
+ 2\omega^2 \cK(\bflambda,\bfpi) 
\label{INEQ:opt-GD-exp}
\\
&+ 2\sqrt{L^2[1- 2A_1(s -(A_2/2)s^2)]^k}\E\|\bfpsi_{X}(\omega^2,\nu)-\bfeta\|_2 
+  L^2[1- 2A_1(s -(A_2/2)s^2)]^k \;.
\nonumber
\end{align}
\label{PROP:opt-GD}
\end{proposition}
\noindent
From Proposition~\ref{PROP:opt-GD}, if $\omega^2 \geq \frac{\sigma^2}{\min(\nu,1-\nu)}$, for any $j=1,\ldots,M$ we have
\[
\|\bfpsi^{(k)}-\bfeta\|_2^2 \leq \|\bff_j-\bfeta\|_2^2+ 2\omega^2 \log\left(\frac{1}{\pi_j\delta}\right) + O(q^k) \;,
\]
with probability at least $1-\delta$ and
\[
\E\|\bfpsi^{(k)}-\bfeta\|_2^2 \leq \|\bff_j-\bfeta\|_2^2+ 2\omega^2 \log\left(\frac{1}{\pi_j}\right) + O(q^k) \;,
\]
for some constant $q=1- 2A_1(s -(A_2/2)s^2)\in(0,1)$.

Though the GD-BMAX algorithm does not give sparse output as the GMA-BMAX algorithm does, it has a faster geometric convergence compared to GMA-BMAX. Similar to Proposition~\ref{prop:opt-GMA}, the results in Proposition~\ref{PROP:opt-GD} do not imply optimal deviation bounds of $\bfpsi^{(k)}$ for small $k$ ($k<\infty$). 

The GD-BMAX algorithm can be naturally applied to continuous
dictionary case. When the dictionary is continuous or $M$ is large,
direct calculation of $\bflambda^{(k-1)} \in\Lambda^M$ in (\ref{EQ:lam-GD-BMAX}) is impractical and we can use the Markov Chain Monte Carlo (MCMC) sampling method \cite{MarRob07} to approximate $\bflambda^{(k-1)}$ for the $k$-th iteration in Algorithm~\ref{ALG:GD}.
\begin{algorithm}[t]
\caption{Metropolis-Hastings (MH) sampler for estimating $\kf_{\bflambda^{(k-1)}}$ at the $k$-th step of Algorithm~\ref{ALG:GD} }
\label{ALG:MH}
\begin{algorithmic}
\REQUIRE Noisy observation $\bfY$, dictionary $\cH=\{f_1,\ldots,f_M\}$, prior $\bfpi \in \Lambda^M$, parameters $\nu, \omega^2$, $(k-1)$-th step
estimator $\bfpsi^{(k-1)}$.
\ENSURE $\bfu_T^{(k-1)}$ as estimator of $\kf_{\bflambda^{(k-1)}}= \sum_{j=1}^M \lambda_j^{(k-1)} \bff_j$.
\STATE Initialize  $j(0)=0$;
\smallskip
\FOR{$t=1,\cdots, T_0+T$}
\STATE Generate $\tilde{j} \sim q(\cdot|j^{(t-1)})$ (e.g., $q$ can be chosen as a Gaussian distribution with mean $\bff_{j^{(t-1)}}$);
\STATE Compute
\[
\rho(j^{(t-1)},\tilde{j})=
\min\left( \frac{q(j^{(t-1)}|\tilde{j})\theta(\tilde{j})}{q(\tilde{j}|j^{(t-1)})\theta(j^{(t-1)})}, 1 \right) \;,
\]
where
\begin{align*}
\theta(j) = \pi_j \exp\Bigg(&-\frac{1}{2\omega^2}\|\bff_j-\bfY\|_2^2
+\frac{1-\nu}{2\omega^2}\|\bfpsi^{(k-1)}-\bff_j\|_2^2\Bigg);
\end{align*}
\STATE Generate a random variable $u\in[0,1]$ and let
\begin{align*}
j^{(t)}&=
\begin{cases}
\tilde{j} \;,&\text{if}\ \ \ u\leq\rho(j^{(t-1)},\tilde{j}); \\
j^{(t-1)} \;,& \text{otherwise}; \\
\end{cases}
\end{align*}
\ENDFOR
\STATE Calculate
\[
\bfu_T^{(k-1)}= \frac{1}{T}\sum_{t=T_0+1}^{T_0+T} \bff_{j^{(t)}};
\]
\end{algorithmic}
\end{algorithm}
A Metropolis-Hastings (MH) algorithm is given in Algorithm~\ref{ALG:MH}. The MH algorithm approximates $\kf_{\bflambda^{(k-1)}}$ with $\bfu_T^{(k-1)}$, so the resulting estimator $\bfpsi^{(k)}$ at the $k$-th step in Algorithm~\ref{ALG:GD} will have perturbations. Below we  provide a result showing how the perturbations from approximating $\kf_{\bflambda^{(k-1)}}$ would influence the convergence of $\log J(\bfpsi^{(k)})$ to $\log J(\bfpsi_{X}(\omega^2,\nu))$.

\begin{proposition}\label{PROP:GD-approxlogJ-MH}
Given $\bfY\in\R^n$, for all $k>0$, we assume that $\bfu_T^{(k-1)}$ in Algorithm~\ref{ALG:MH} satisfies
\begin{align}
& \E [\bfu_T^{(k-1)}|\bfpsi^{(k-1)}] = \kf_{\bflambda^{(k-1)}},  \\
& \|COV[\bfu_T^{(k-1)}|\bfpsi^{(k-1)}]\|_{\text{op}} \leq s^2,
\end{align}
where $\|\cdot\|_{\text{op}}$ is the matrix spectral norm. Then, we have
\begin{align*}
\E \left(\log J(\bfpsi^{(k)}) - \log J(\bfpsi_{X}(\omega^2,\nu))\right)
\leq &\ [1- 2A_1(s -(A_2/2)s^2)]^k  \left(\log J(\bfpsi^{(0)})- \log J(\bfpsi_{X}(\omega^2,\nu))\right) 
\\
&+ A_1 n s^2/2 \;.
\end{align*}
\end{proposition}

A variant of the GD-BMAX algorithm with continuous dictionary notation is also provided in Algorithm \ref{ALG:GD-CONT}, in which we assume the dictionary is parameterized by $w$ as $\cH_{\Omega}=\{\bff(w): \bff(w)\in\R^n \}$ and $ \Omega=\{w: w \in \R^d\}$.

\begin{algorithm}[!ht]
\caption{Gradient Descent Algorithm with Continuous Dictionary}
\label{ALG:GD-CONT}
\begin{algorithmic}
\REQUIRE Noisy observation $\bfY$, continuous dictionary $\cH_{\Omega}$, prior $\bfpi_{\Omega}$, parameters $\nu, \omega^2$.
\ENSURE  Aggregate estimator  $\bfpsi^{(k)}$.
\STATE  Let $\bfpsi^{(0)}=0$.
\smallskip
\FOR{$k=1,2, \ldots$}
\STATE Choose step size $t_k\in(0,2/A_2)$ for $k>0$;
\STATE Initialize  $w^{(0)}$;
\smallskip
\FOR{$t=1,\cdots, T_0+T$}
\STATE Generate $\tilde{w}\sim q(\cdot|w^{(t-1)})$;
\STATE Compute
\[
\gamma(w^{(t-1)},\tilde{w})=
\min\left( \frac{q(w^{(t-1)}|\tilde{w})\theta(\tilde{w})}{q(\tilde{w}|w^{(t-1)})\theta(w^{(t-1)})}, 1 \right) \;,
\]
where
\begin{align*}
\theta(w) = \pi(w) \exp\Bigg(&-\frac{1}{2\omega^2}\|\bff(w)-\bfY\|_2^2
+\frac{1-\nu}{2\omega^2}\|\bfpsi^{(k-1)}-\bff(w)\|_2^2\Bigg);
\end{align*}
\STATE Generate a random variable $u\in[0,1]$ and let
\begin{align*}
w^{(t)}&=
\begin{cases}
\tilde{w} \;,&\text{if}\ \ \ u\leq\gamma(w^{(t-1)},\tilde{w}); \\
w^{(t-1)} \;,& \text{otherwise}; \\
\end{cases}
\end{align*}
\ENDFOR
\STATE Calculate
\[
\bfu_T^{(k-1)}= \frac{1}{T}\sum_{t=T_0+1}^{T_0+T} \bff(w^{(t)});
\]
\STATE
\[
\bfpsi^{(k)} = (1-t_k\frac{1-\nu}{\omega^2}) \bfpsi^{(k-1)} + t_k\frac{1-\nu}{\omega^2}\bfu_T^{(k-1)};
\]
\ENDFOR
\end{algorithmic}
\end{algorithm}

\section{Experiments}
Although the contribution of this work is mainly theoretical, we include some simulations to illustrate the performance of the GMA-BMAX algorithm and GD-BMAX algorithm proposed for the BMAX method. We focus on the average performance of different algorithms and configurations.

The simulations will focus on discrete dictionary in order to compare the BMAX method with existing algorithms while the BMAX method can deal with continuous dictionary. 
Set $n=50$ and $M=500$. We identify a function $\bff$ with a vector $(f(x_1),\ldots, f(x_n))^\top \in \R^n$.
Let $\bfI_n$ denote the identity matrix of $\R^n$ and let $\Theta \sim \cN(0, \bfI_n)$ be a random vector, and define $\{\bff_1, \ldots, \bff_M\}$ as
\begin{equation}
\left\{
\begin{aligned}
\bff_j &= \Theta + s\cdot\bfzeta_j \;\;\; \text{for } 1\leq j \leq M_1 \;,\\
\bff_j &= \bfzeta_j  \;\;\; \text{for } M_1 < j \leq M \;,\\
\end{aligned}
\right.
\end{equation}
where $\bfzeta_j \sim \cN(0, \bfI_n) \; (j=1,\ldots,M)$ are independent random vectors.

Let $\Delta \sim \cN(0, \bfI_n)$ be a random vector. The regression function is defined by $\bfeta=\bff_1+0.5\Delta$. Note that typically $\bff_1$ will be the closest function to $\bfeta$ but not necessarily. The noise vector $\bfxi \sim \cN(0,\sigma^2\bfI_n)$ is independent of $\{\bff_1, \ldots, \bff_M\}$ and $\sigma=2$.

We define the oracle model (OM) $f_{k^\ast}$, where $k^\ast=\argmin_{j}\MSE(f_j)$. The model $f_{k^\ast}$ is clearly not a valid estimator because it depends on the unobserved $\eta$, however it can be used as a performance benchmark. The performance difference between an estimator $\hat\eta$ and the oracle model $f_{k^\ast}$ is measured by the \emph{regret} defined as:
\begin{equation}
R(\hat\eta) = \MSE(\hat\eta) - \MSE(f_{k^\ast}) \;.
\label{EQ:defregret}
\end{equation}

Since the target is $\bfeta=\bff_1 + 0.5 \Delta $, and $\bff_1$ and $\Delta$ are random Gaussian
vectors, the oracle model is likely $\bff_1$ (but it may not be $\bff_1$ due to the misspecification vector $\Delta$).
The noise $\sigma=2$ is relatively large, which implies a situation where the best convex aggregation does not outperform the oracle model.
This is the scenario  we considered here. For simplicity, all algorithms use a flat prior $\pi_j=1/M$ for all $j$. The experiment is performed with 100 replications.

One method compared is the STAR algorithm of \cite{Aud08}, which is optimal both in expectation and in deviation under the uniform prior. Mathematically, suppose $f_{k_1}$ is the empirical risk minimizer among functions in $\cH$, where
\begin{equation}
k_1 = \argmin_{j} \hMSE(f_j) \;,
\label{EQ:empmin}
\end{equation}
the STAR estimator $f^*$ is defined as
\begin{equation}
f^*= (1-\alpha^*)f_{k_1} + \alpha^* f_{k_2} \;,
\label{EQ:STAR1}
\end{equation}
where
\begin{equation}
(\alpha^*, k_2) = \argmin_{\alpha,j} \hMSE\big((1-\alpha)f_{k_1} + \alpha f_j \big), \ \ \alpha\in(0,1).
\label{EQ:STAR2}
\end{equation}

Another natural solution to solve the model averaging problem is to take the vector of weights $\bflambda^{\textsc{proj}}$ defined by
\begin{equation}
\label{EQ:proj}
\bflambdaproj \in \argmin_{\bflambda \in \Lambda^M }\hMSE(\ff_{\bflambda})\;,
\end{equation}
which minimizes the empirical risk. We call $\bflambdaproj$ the vector of \emph{projection weights} since the aggregate estimator $\kf_{\bflambdaproj}$ is the projection of $\bfY$ onto the convex hull of the $\bff_j$'s.

We compare the exponential weighted model averaging method denoted as EWMA. $Q$-aggregation with linear entropy (when $\rho(t)=1$) is also compared and will be solved by GMA-0 from \cite{DaiRigZhang12} (see Algorithm~\ref{ALG:gma0} below).

\begin{algorithm}[htp]
\caption{GMA-0 Algorithm}
\label{ALG:gma0}
\begin{algorithmic}
\REQUIRE Noisy observation $\bfY$, dictionary $\cH=\{f_1,\ldots,f_M\}$, prior $\bfpi \in \Lambda^M$, parameters $\nu, \beta$.
\ENSURE  Aggregate estimator  $\kf_{\bflambda^{(k)}}$.
\STATE  Let $\bflambda^{(0)}=0$,  $\kf_{\bflambda^{(0)}}=0$;
\smallskip
\FOR{$k=1, 2, \ldots$}
\STATE Set $\alpha_k=\frac{2}{k+1}$;
\STATE  $J^{(k)}=\argmin_j Q(\bflambda^{(k-1)}+ \alpha_k(\ee^{(j)} -  \bflambda^{(k-1)}))$;
\STATE  $\bflambda^{(k)}= \bflambda^{(k-1)}+ \alpha_k(\ee^{(J^{(k)})} - \bflambda^{(k-1)})$;
\ENDFOR
\end{algorithmic}
\end{algorithm}

We adopt flat priors $\pi=1/M$ ($j=1,\ldots,M$) for simplicity. From
the definition of $Q(\bflambda)$ \eqref{EQ:defQ}, it is easy to see
that, the minimizer of $Q(\bflambda)$ (when $\rho(t)=1$ with flat
prior) becomes $\bflambdaproj$ in \eqref{EQ:proj} by setting $\nu=0$,
so $\bflambdaproj$ is approximated by GMA-0 with $\nu=0$ and 200
iterations, and the projection algorithm is denoted by PROJ. 

We evaluate two versions of the GD-BMAX algorithm that one is exactly the algorithm depicted in Algorithm \ref{ALG:GD} and the other is to approximate $\kf_{\bflambda^{(k-1)}}$ in Algorithm \ref{ALG:GD} with the MH sampler in Algorithm \ref{ALG:MH}. We denote the latter variant as the GD-MH-BMAX algorithm. For the MH sampler, we use Gaussian distribution for $q(\cdot|\cdot)$ and set $T_0=T=500$.
The GMA-BMAX, GD-BMAX GD-MH-BMAX and GMA-0 algorithms are run for $K$ iterations up to $K=150$, with $\nu=1/2$. Parameter $\omega$ for GMA-BMAX, GD-BMAX, GD-MH-BMAX and EWMA is tuned by 10-fold cross-validation. Regrets of all algorithms defined in \eqref{EQ:defregret} are reported for comparisons.

In the following, we consider two scenarios. The first situation is when
the bases are not very correlated; in such case GMA-0 can perform better than
GMA-BMAX, GD-BMAX and GD-MH-BMAX because the former (which employs linear entropy) produces sparser estimators. The second situation is when the bases are highly correlated; in such case
GMA-BMAX, GD-BMAX and GD-MH-BMAX are superior than GMA-0 because the former algorithms (which employ strongly convex KL-entropy) give the clustered
basis functions similar weights while the GMA-0 tends to select one from the
clustered basis functions, which may lead to model selection error. The
correlated bases situation occurs in the continuous dictionary setting.

\subsection{Experiment 1: when $s=1$ and $M_1=50$, bases are not very correlated}
\begin{table*}[!ht]
\caption{Performance Comparison ($s=1$ and $M_1=50$)}
\label{table:exp1-1}
\begin{center}
\scalebox{0.78}{
\begin{tabular}{ccc} \hline
\textbf{STAR} & \textbf{PROJ} & \textbf{EWMA} \\ 
\hline
0.3895$\pm$0.3816 & 0.3953$\pm$0.2680 & 0.2823$\pm$0.5718  \\ 
\hline
\end{tabular}
}

\bigskip
\scalebox{0.78}{
\begin{tabular}{ccccccc} \hline
 & $k=1$  & $k=5$  & $k=15$  & $k=60$  & $k=100$  & $k=150$\\ 
 \hline
\textbf{GMA-BMAX} &  0.5726$\pm$0.7536 & 0.4196$\pm$0.4339 & 0.3480$\pm$0.3606 & 0.2840$\pm$0.3213 & 0.2745$\pm$0.3180 & 0.2690$\pm$0.3175 \\ 
\textbf{GD-BMAX} &  1.6187$\pm$0.3313 & 0.7231$\pm$0.2305 & 0.3003$\pm$0.3098 & 0.2603$\pm$0.3161 & 0.2602$\pm$0.3161 & 0.2602$\pm$0.3161 \\ 
\textbf{GD-MH-BMAX} & 1.6192$\pm$0.3311 & 0.7253$\pm$0.2328 & 0.3061$\pm$0.3127 & 0.2645$\pm$0.3107 & 0.2705$\pm$0.3237 & 0.2583$\pm$0.3058 \\
\textbf{GMA-0} & 0.3742$\pm$0.7804 & 0.2916$\pm$0.3691 & 0.2567$\pm$0.3291 & 0.2546$\pm$0.3289 & 0.2548$\pm$0.3292 & 0.2550$\pm$0.3287 \\ 
\hline
\end{tabular}
}
\end{center}
\end{table*}

\begin{table}[!ht]
\caption{Cumulative Frequency of Regret ($s=1$, $M_1=50$ and $k=150$)}
\label{table:exp1-2}
\begin{center}
\scalebox{0.78}{
\begin{tabular}{ccccccccc} \hline
Upper Boundary & 0.00 & 0.33 & 0.66 & 0.98 & 1.31 & 1.64 & 1.97 & 2.29 \\ 
\hline
\textbf{GMA-BMAX} &  17 & 69 & 85 & 95 & 100 & 100 & 100 & 100 \\ 
\textbf{GD-BMAX} & 13 & 70 & 85 & 96 & 100 & 100 & 100 & 100 \\
\textbf{GD-MH-BMAX} & 14 & 68 & 85 & 97 & 100 & 100 & 100 & 100 \\
\textbf{GMA-0} &  21 & 72 & 84 & 96 & 100 & 100 & 100 & 100 \\ 
\textbf{EWMA} &  39 & 78 & 82 & 84 & 88 & 95 & 99 & 100 \\ 
\hline
\end{tabular}
}
\end{center}
\end{table}

Table~\ref{table:exp1-1} compares the commonly used estimators, i.e., STAR, PROJ and EWMA, with GMA-BMAX, GD-BMAX, GD-MH-BMAX, and GMA-0. The regrets are reported using the format of ``$\text{mean} \pm \text{standard deviation}$''. Table~\ref{table:exp1-2} reports the cumulative frequency of GMA-BMAX, GD-BMAX, GD-MH-BMAX, GMA-0 and EWMA with 100 replicates and fixed iteration $k$. For each entry, we summarize the number of replicates with regrets which are smaller than or equal to the upper boundary value.

The results in Table~\ref{table:exp1-1} indicate that GMA-0 achieves the best performance as iteration $k$ increases. GMA-0 outperforms STAR, EWMA and PROJ after as small as $k=15$ iterations, which still gives a relatively sparse averaged model. This is consistent with Theorems~4.1 and 4.2 in \cite{DaiRigZhang12} which shows that GMA-0 has optimal bounds for small $k$ ($k\geq 2$).

GMA-BMAX, GD-BMAX and GD-MH-BMAX prefer dense model that assigns similar weights to similar candidates. 
%Specifically, GMA-BMAX is greedy algorithm with estimators converge to $\bfpsi_{X}(\omega^2,\nu)$, the aggregate by BMAX as defined in \eqref{EQ:defpsiex}. 
It is easy to verify that $\bfpsi_{X}(\omega^2,\nu)=\kf_{\bflambda}$ with $\bflambda\in\Lambda^M$ defined as
\begin{align*}
\lambda_j \propto \pi_j \exp\left(-\frac{1}{2\omega^2}\|\bff_j-\bfY\|_2^2+\frac{1-\nu}{2\omega^2}\|\bfpsi_{X}(\omega^2,\nu)-\bff_j\|_2^2\right) \;,
\end{align*}
and thus two similar candidates $\bff_i$ and $\bff_j$ will have similar weights.

In contrast, GMA-0 prefers sparse model by selecting candidates less related to the estimator from previous iteration. Specifically, with flat prior $\bfpi$, the choice of $J^{(k)}$ in GMA-0 algorithm can be further simplified to
\begin{equation}
J^{(k)}=\argmin_j \left\{\|\bff_j-\bfY\|_2^2-(1-\nu)(1-\alpha_k)\|\kf_{\bflambda^{(k-1)}}-\bff_j\|_2^2 \right\} \;,
\label{EQ:gma0Jk-simp1}
\end{equation}
and thus at each iteration $k$ in the GMA-0 algorithm, the estimator $\bff_j$ is preferred if it is close to $\bfY$ while it is less correlated to the current aggregate estimator $\kf_{\bflambda^{(k-1)}}$ (because the minimization requires $\|\kf_{\bflambda^{(k-1)}}-\bff_j\|_2^2$ to be large while $\|\bff_j-\bfY\|_2^2$ being small).

In Experiment 1, the first 50 candidates $\{\bff_1, \ldots, \bff_M\}$ are closer to the truth $\bfeta=\bff_1+0.5\Delta$ than other candidate $\bff_j$ for $j>50$, yet they are not very correlated when $s=1$. Sparsity is preferred when correlations are not strong among the predicting features (in our experiment, the first 50 candidates), and GMA-0 is to output sparser estimator than GMA-BMAX, GD-BMAX and GD-MH-BMAX. Therefore, we would expect GMA-0 achieving smaller regret than GMA-BMAX, GD-BMAX and GD-MH-BMAX under this situation.
Although GMA-BMAX, GD-BMAX and GD-MH-BMAX are worse than GMA-0 when the bases are not very correlated, they beat EWMA, STAR and PROJ when the iteration $k$ is large enough.
\begin{figure*}[!ht]
\centering
\subfigure[]{
\includegraphics[scale=0.25]{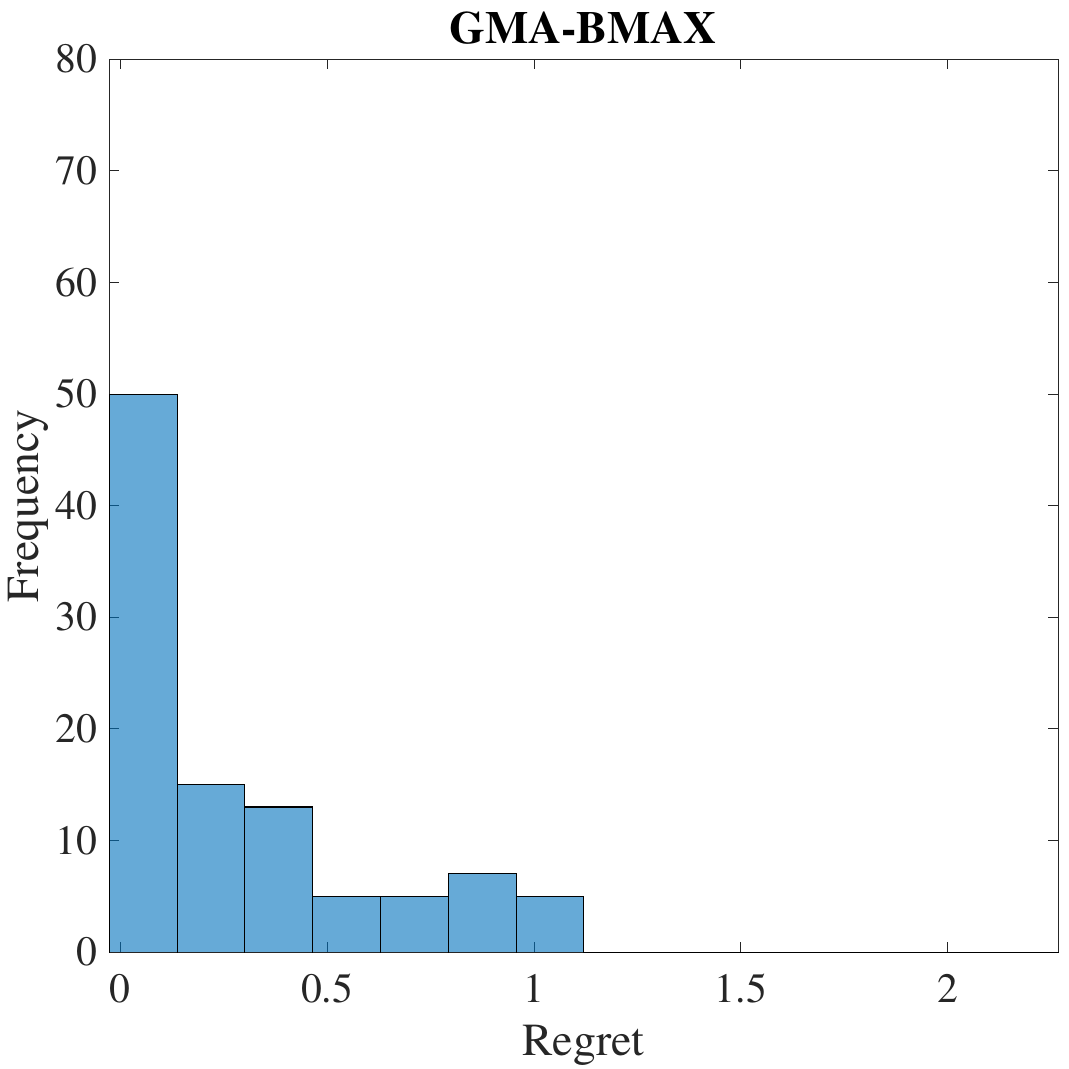}
}
\subfigure[]{
\includegraphics[scale=0.25]{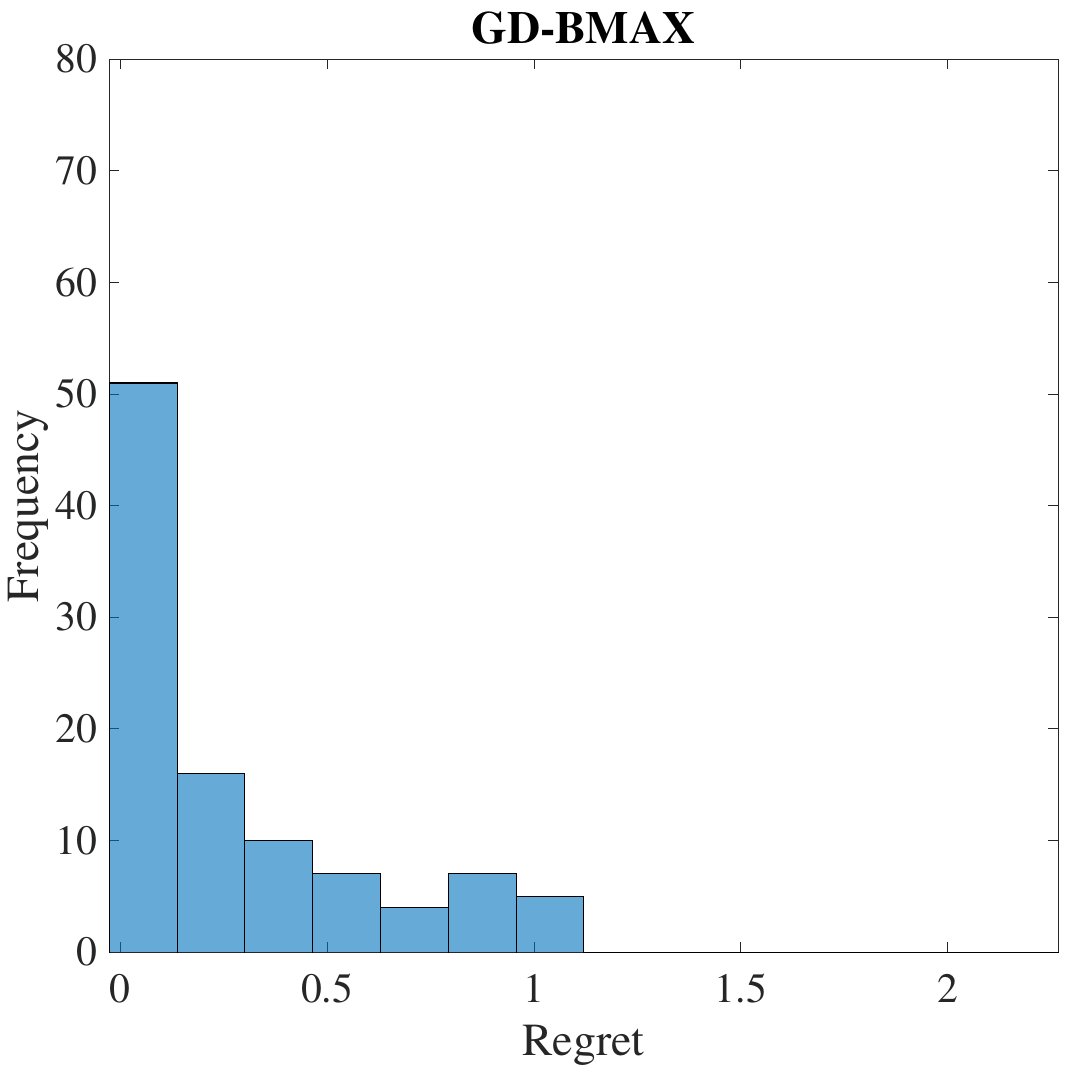}
}
\subfigure[]{
\includegraphics[scale=0.33]{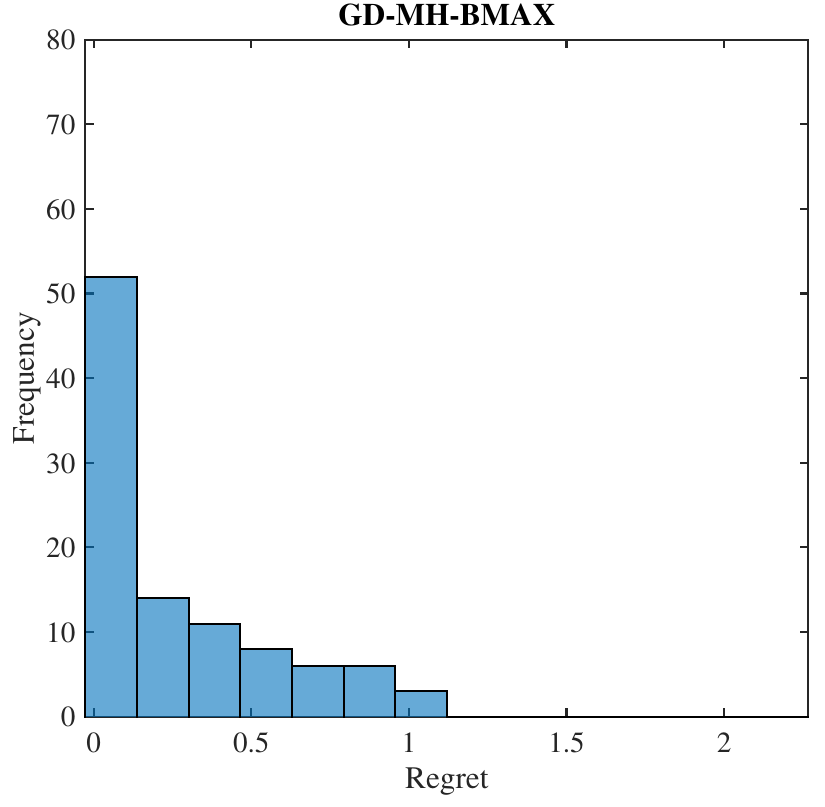}
}
\subfigure[]{
\includegraphics[scale=0.25]{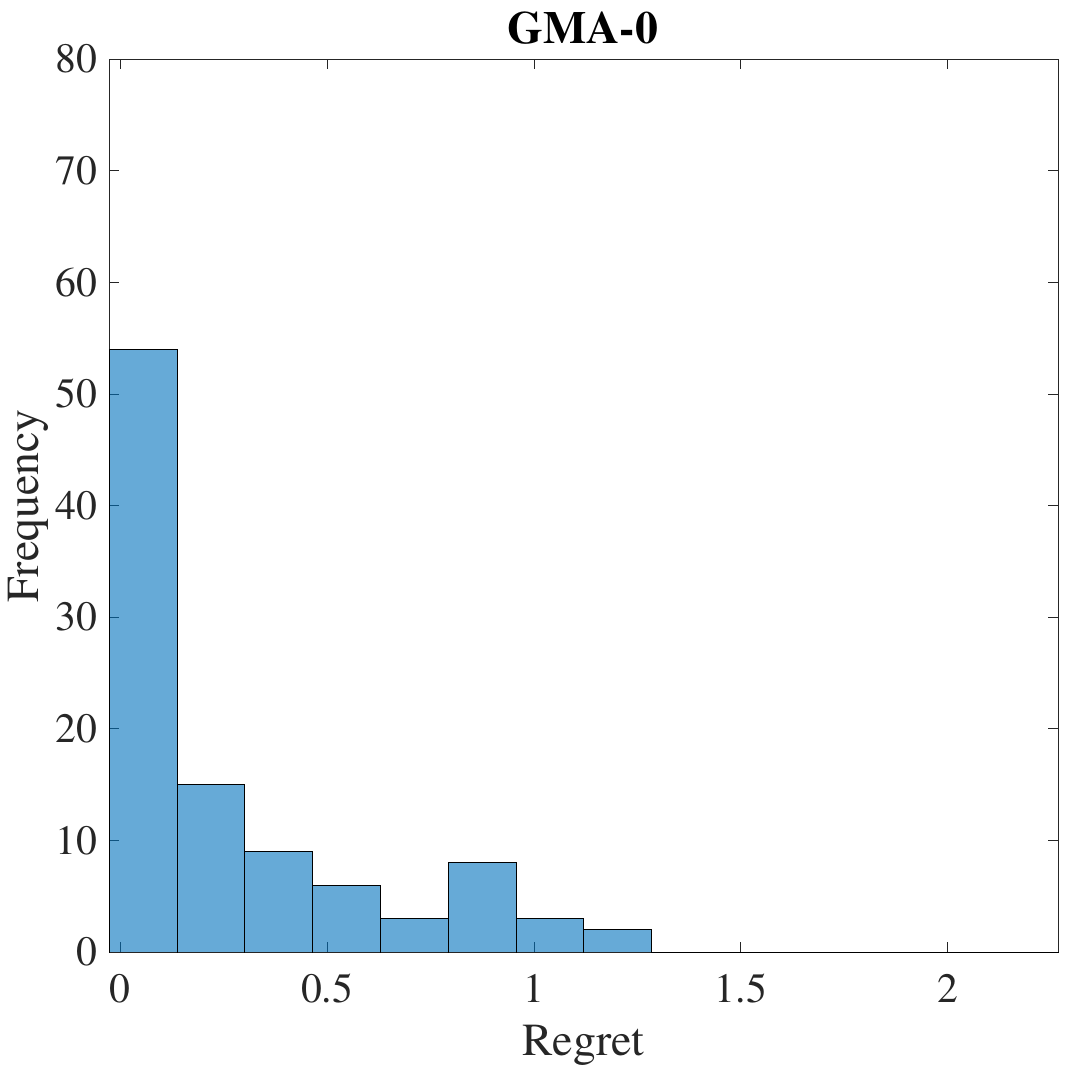}
}
\subfigure[]{
\includegraphics[scale=0.25]{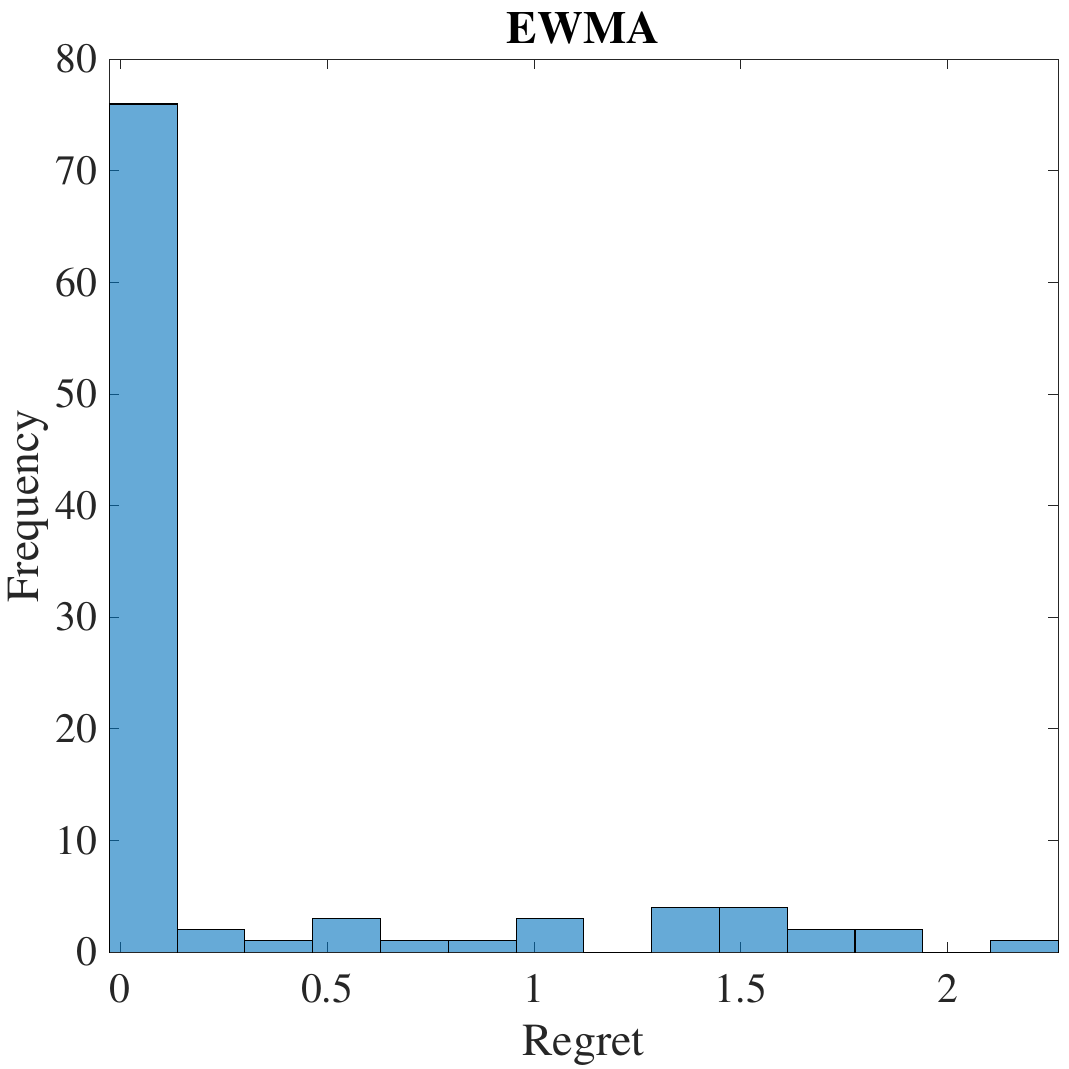}
}
\subfigure[]{
\includegraphics[scale=0.33]{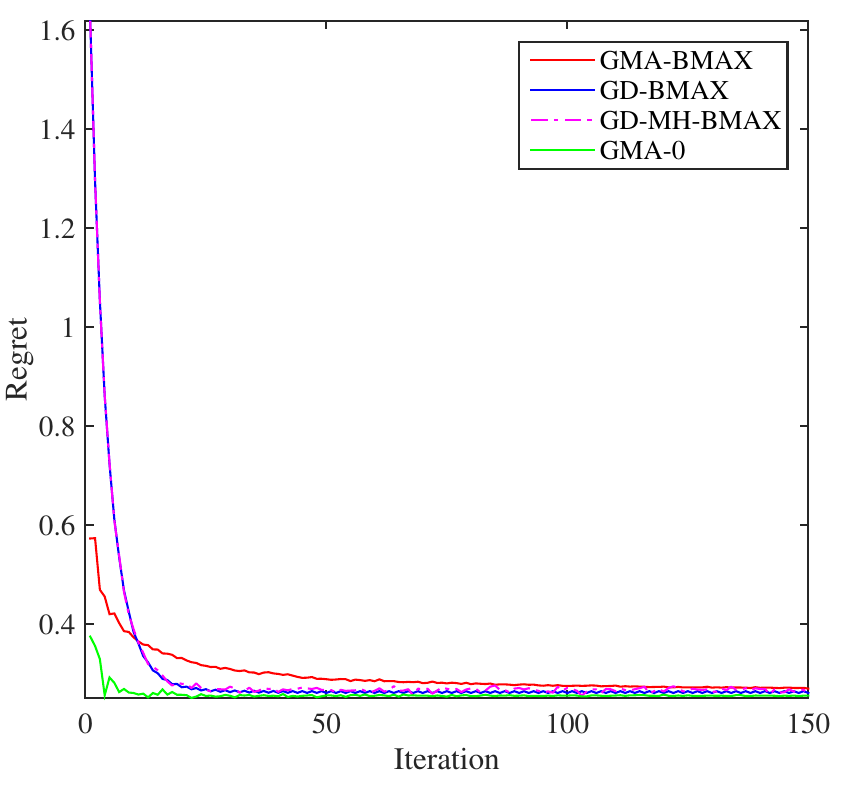}
}
\caption{(a-e) show the histograms of regrets for GMA-BMAX, GD-BMAX, GD-MH-BMAX, GMA-0 (with $k=150$) and EWMA; (f) reports the results of regrets $R(\bfpsi^{(k)})$ versus iterations $k$ under the case $s=1$ and $M_1=50$.}
\label{fig:exp1}
\end{figure*}

Figure~\ref{fig:exp1} compares the regrets of GMA-BMAX, GD-BMAX, GD-MH-BMAX, GMA-0 and EWMA. (a-e) illustrate the histograms of the regrets with 100 replicates. The corresponding cumulative frequencies are presented in Table~\ref{table:exp1-2}.
Since Proposition~\ref{prop:opt-GMA} indicates that the optimal deviation bound is obtained by $k \to \infty$, we pick $k=150$ for GMA-BMAX, GD-BMAX, GD-MH-BMAX and GMA-0 in order to make fair comparison. As the histograms show, although EWMA has the most replicates in which the regrets are close to zero, the distribution of the EWMA estimator is the most dispersive with many extreme values compared to other methods. The performance is consistent with~\cite{DalTsy07, DalTsy08} and \cite{DaiRigZhang12} which state that the EWMA estimator is optimal in expectation but sub-optimal in deviation. Therefore, we would expect GMA-BMAX, GD-BMAX, GD-MH-BMAX and GMA-0 enjoy more concentrated distribution than EWMA because they are also optimal in deviation. 
(f) shows the convergence of GMA-BMAX, GD-BMAX, GD-MH-BMAX and
GMA-0. We observe that GD-BMAX and GD-MH-BMAX show faster convergence
compared to GMA-BMAX, which is consistent with
Proposition~\ref{PROP:opt-GD}. Moreover, the GD-MH-BMAX algorithm
approximates GD-BMAX well with small perturbations, which is consistent with Proposition~\ref{PROP:GD-approxlogJ-MH}.

Note that GMA-BMAX and GMA-0 produce different estimators after the first iteration ($k=1$). GMA-BMAX selects $j\in\{1,\ldots,M\}$ that minimizes $\log J(\bff_j)$, while \mbox{GMA-0} selects $j\in\{1,\ldots,M\}$ that minimizes $Q(\bff_j)$ and the output of the first stage is actually the empirical risk minimizer $\bff_{k_1}$ where $k_1=\argmin_{j}\hMSE(f_j)$. Moreover, since sparse model is preferred in this scenario, GMA-0 has smaller regret than GMA-BMAX, GD-BMAX and GD-MH-BMAX, and all of them converge fast within a few iterations.

\subsection{Experiment 2: when $s=\sigma/\|\bfzeta_j\|$ and $M_1=M$, bases are all highly correlated}
\begin{table*}[!ht]
\caption{Performance Comparison($s=\sigma/\|\bfzeta_j\|$ and $M_1=M$)}
\label{table:exp2-1}
\begin{center}
\scalebox{0.78}{
\begin{tabular}{ccc} \hline
\textbf{STAR} & \textbf{PROJ} & \textbf{EWMA} \\ 
\hline
0.0587$\pm$0.0338 & 0.0495$\pm$0.0322 & 0.0452$\pm$0.0307  \\ 
\hline
\end{tabular}
}

\bigskip
\scalebox{0.78}{
\begin{tabular}{ccccccc} \hline
 & $k=1$  & $k=5$  & $k=15$  & $k=60$  & $k=100$  & $k=150$\\ 
 \hline
\textbf{GMA-BMAX} &  0.0929$\pm$0.0381 & 0.0503$\pm$0.0300 & 0.0431$\pm$0.0285 & 0.0408$\pm$0.0282 & 0.0407$\pm$0.0283 & 0.0406$\pm$0.0283 \\ 
\textbf{GD-BMAX} &  0.7817$\pm$0.2207 & 0.2525$\pm$0.1041 & 0.0501$\pm$0.0317 & 0.0406$\pm$0.0284 & 0.0406$\pm$0.0284 & 0.0406$\pm$0.0284 \\ 
\textbf{GD-MH-BMAX} & 0.7414$\pm$0.2126 & 0.1996$\pm$0.0876 & 0.0435$\pm$0.0291 & 0.0393$\pm$0.0273 & 0.0382$\pm$0.0273 & 0.0384$\pm$0.0284 \\
\textbf{GMA-0} & 0.0904$\pm$0.0380 & 0.0650$\pm$0.0379 & 0.0630$\pm$0.0371 & 0.0621$\pm$0.0364 & 0.0622$\pm$0.0364 & 0.0620$\pm$0.0364 \\ 
\hline
\end{tabular}
}
\end{center}
\end{table*}

\begin{table}[!ht]
\caption{Cumulative Frequency of Regret ($s=\sigma/\|\bfzeta_j\|$, $M_1=M$ and $k=150$)}
\label{table:exp2-2}
\begin{center}
\scalebox{0.78}{
\begin{tabular}{ccccccccc} \hline
Upper Boundary & 0.000 & 0.026 & 0.051 & 0.077 & 0.103 & 0.128 & 0.154 & 0.180 \\ 
\hline
\textbf{GMA-BMAX} &  5 & 31 & 69 & 89 & 97 & 99 & 100 & 100 \\ 
\textbf{GD-BMAX} & 6 & 31 & 69 & 88 & 97 & 99 & 100 & 100 \\
\textbf{GD-MH-BMAX} & 5 & 36 & 71 & 90 & 97 & 99 & 100 & 100 \\
\textbf{GMA-0} &  4 & 17 & 40 & 67 & 89 & 95 & 99 & 100 \\ 
\textbf{EWMA} &  5 & 26 & 59 & 86 & 96 & 98 & 100 & 100 \\ 
\hline
\end{tabular}
}
\end{center}
\end{table}

\begin{figure*}[ht]
\centering
\subfigure[]{
\includegraphics[scale=0.25]{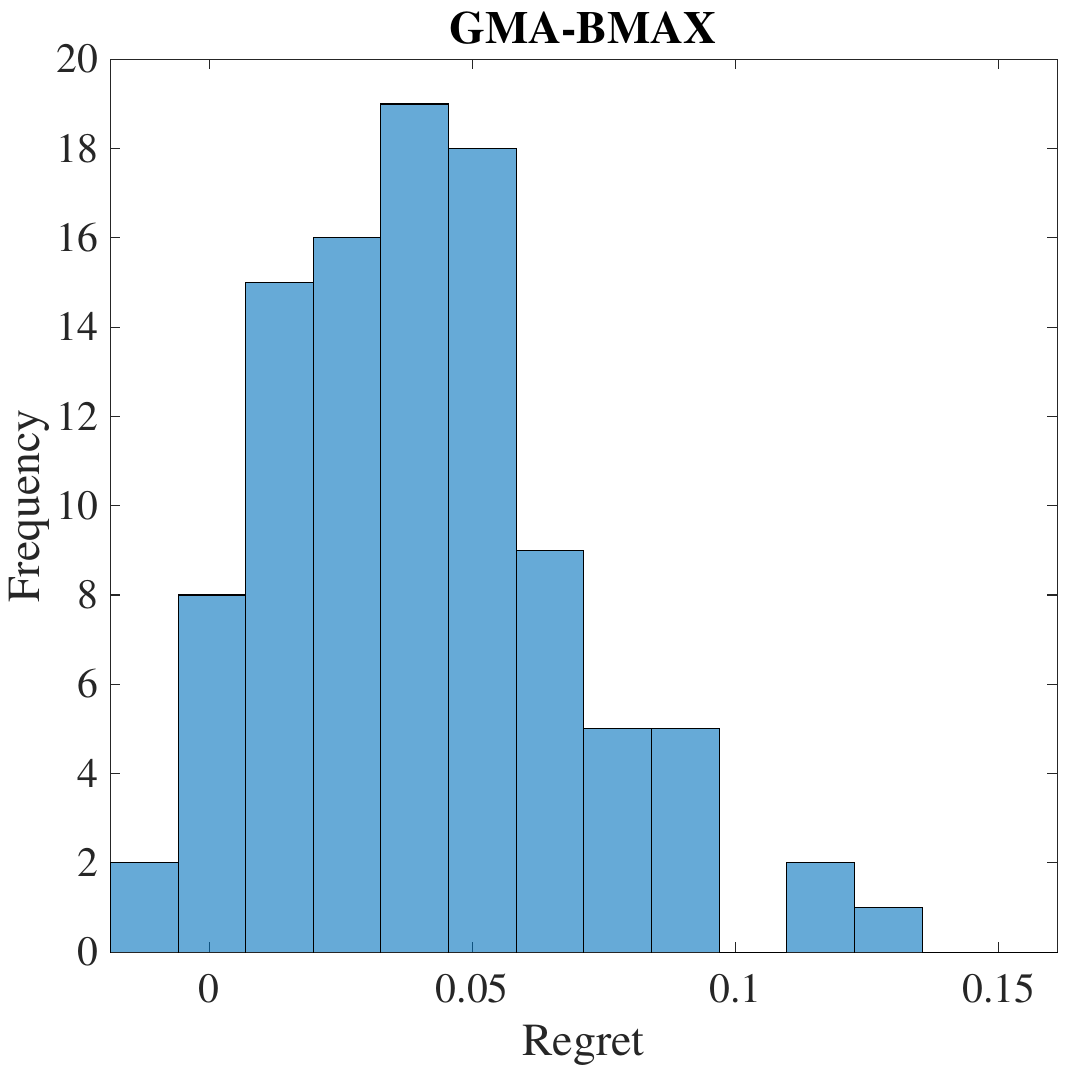}
}
\subfigure[]{
\includegraphics[scale=0.25]{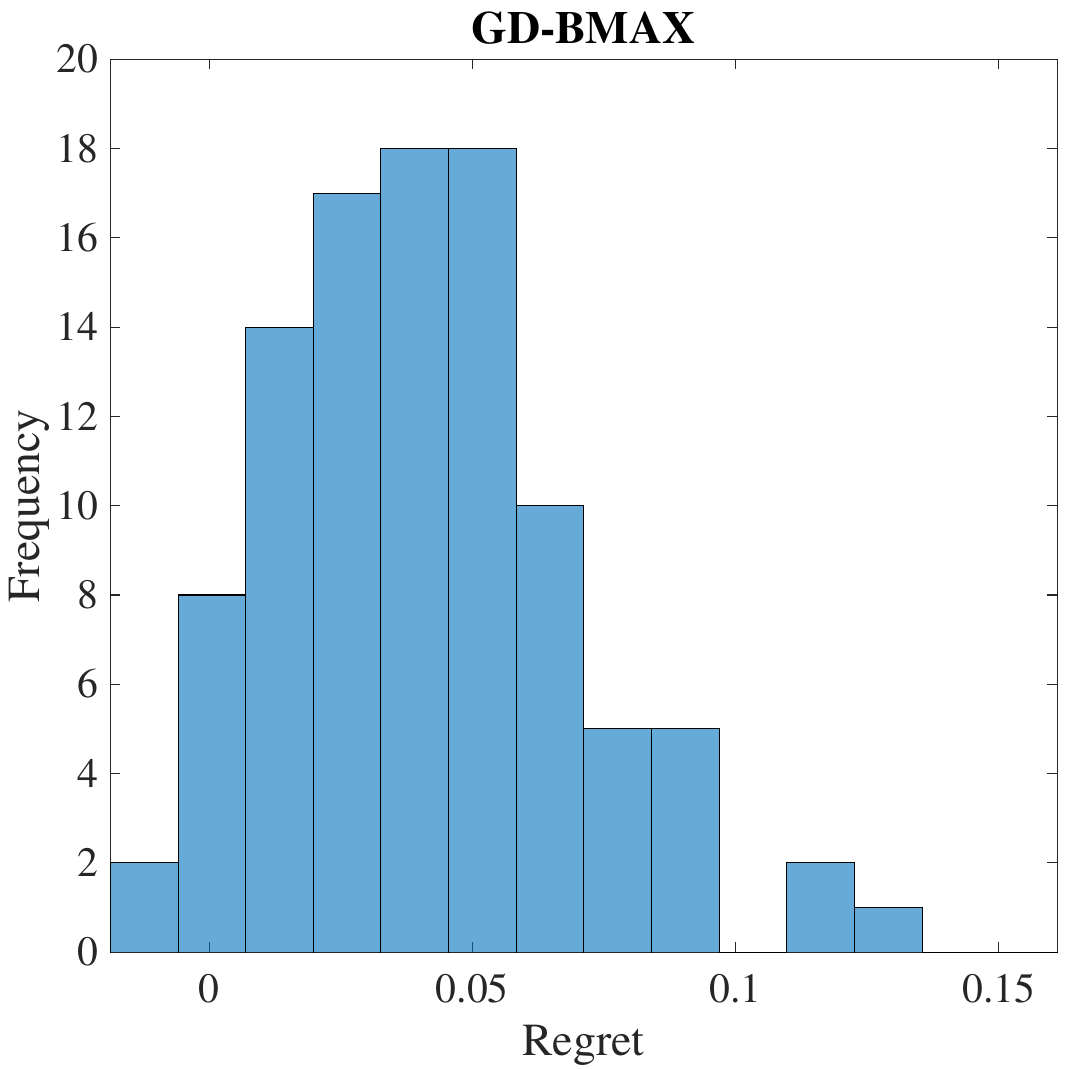}
}
\subfigure[]{
\includegraphics[scale=0.33]{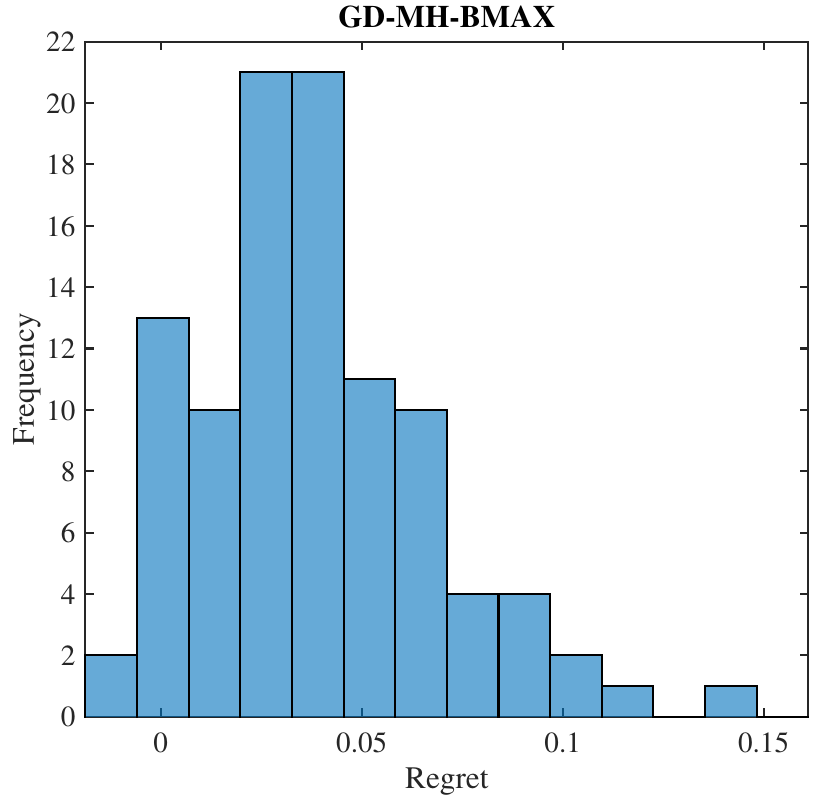}
}
\subfigure[]{
\includegraphics[scale=0.25]{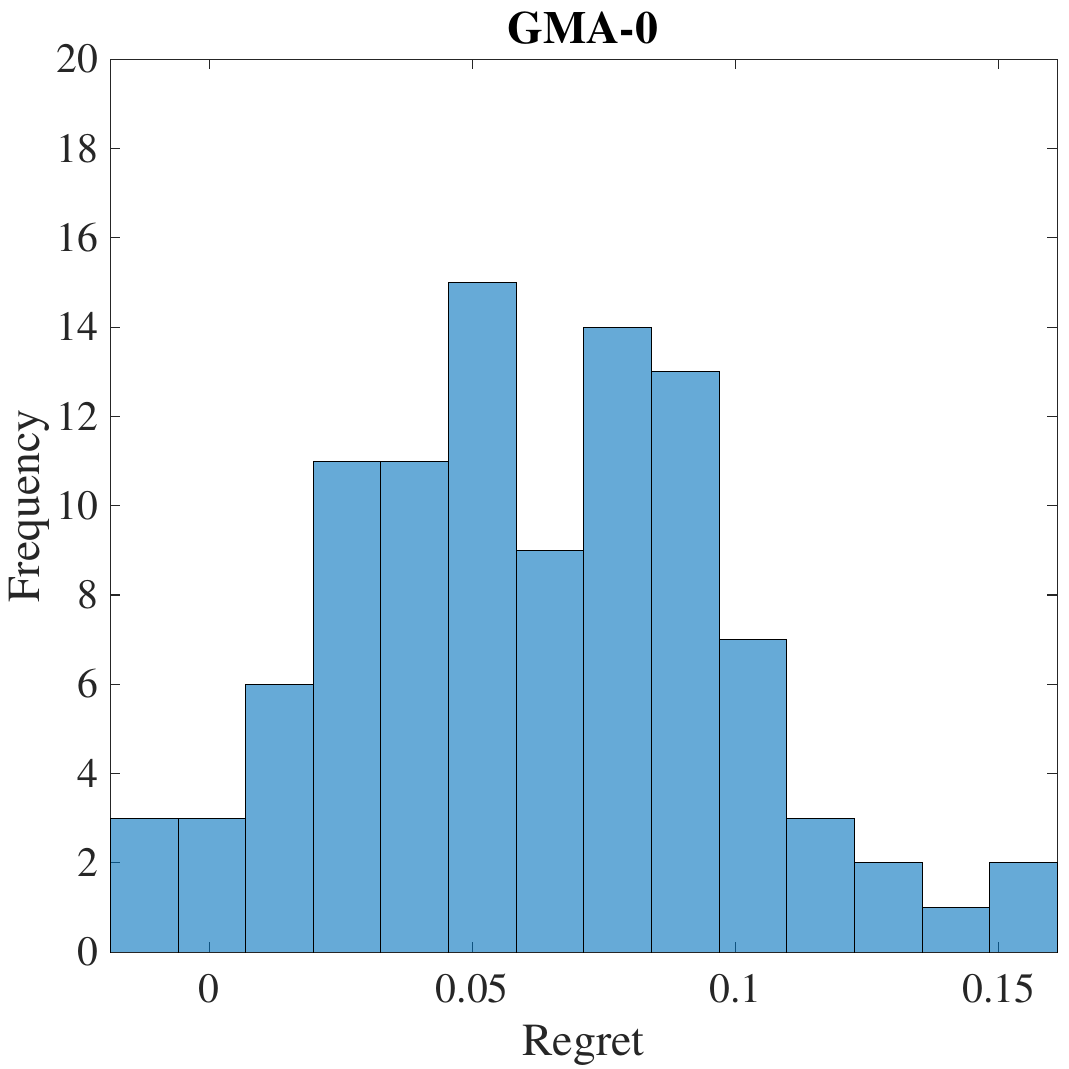}
}
\subfigure[]{
\includegraphics[scale=0.25]{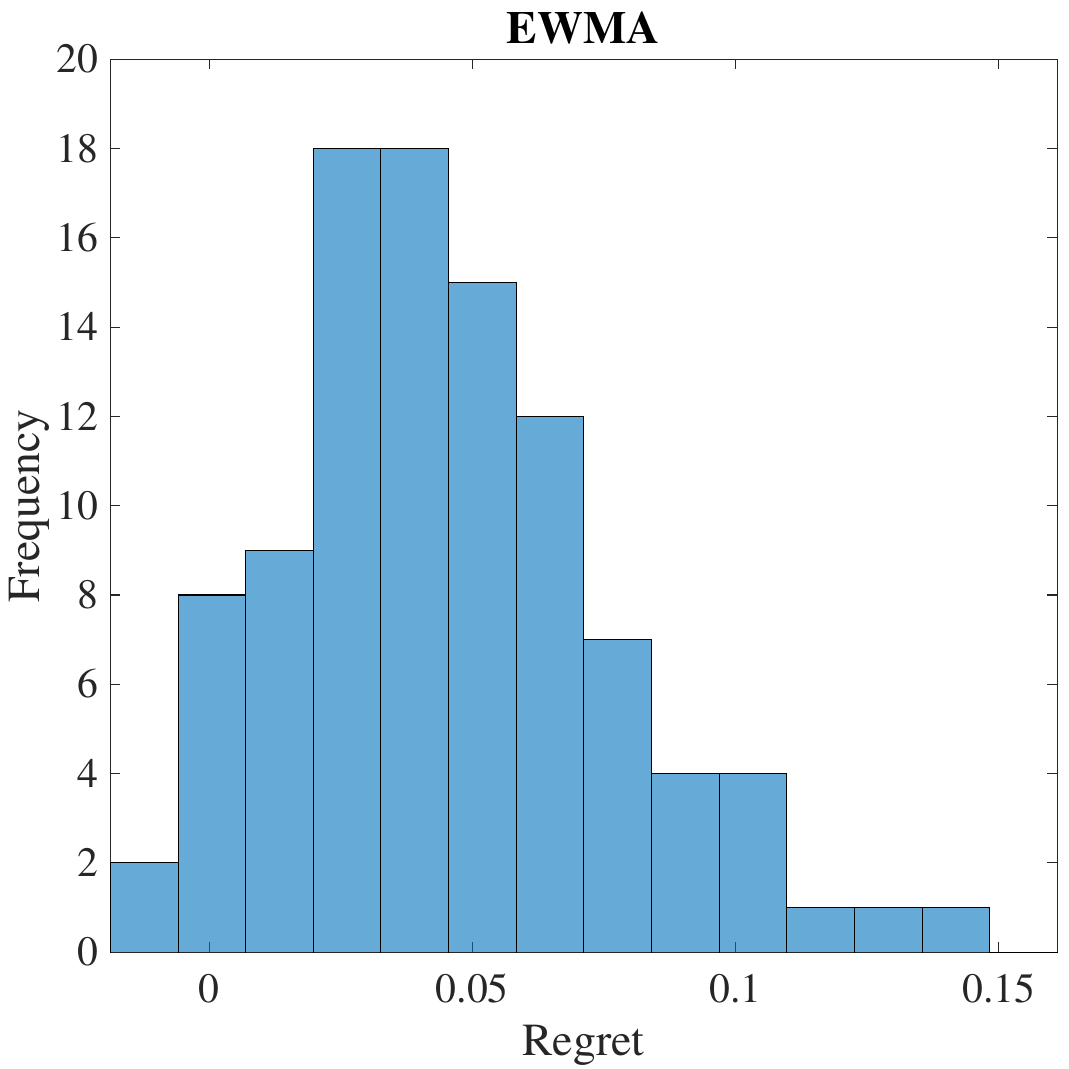}
}
\subfigure[]{
\includegraphics[scale=0.33]{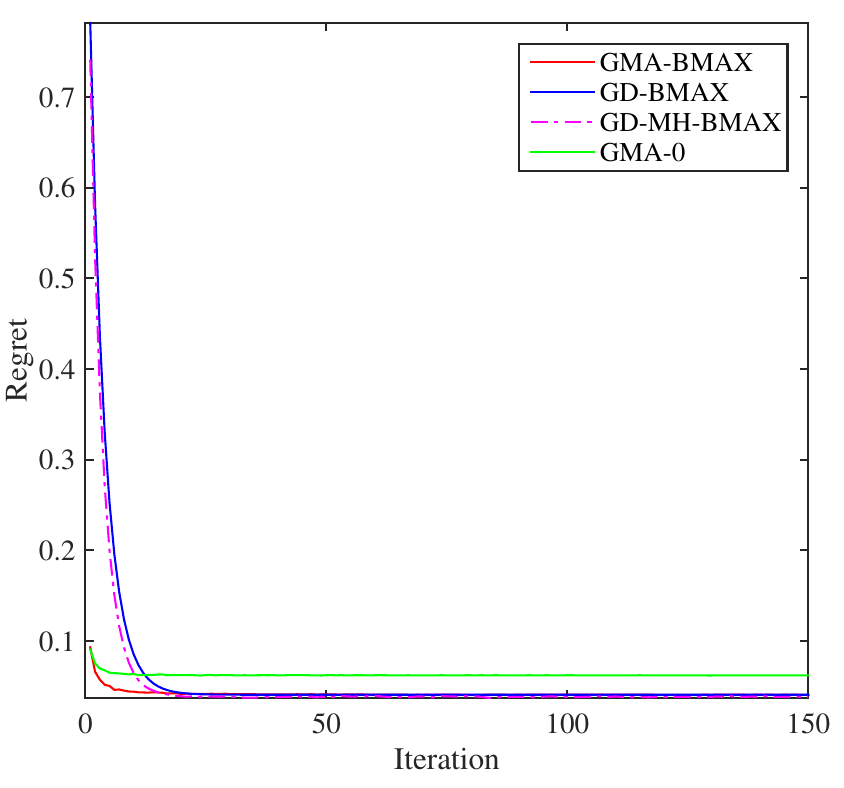}
}
\caption{(a-e) show the histograms of regrets for GMA-BMAX, GD-BMAX, GD-MH-BMAX, GMA-0 (with $k=150$) and EWMA; (f) reports the result of regrets $R(\bfpsi^{(k)})$ versus iterations $k$ under the case $s=\sigma/\|\bfzeta_j\|$ and $M_1=M$.}
\label{fig:exp2}
\end{figure*}

In Experiment 2, we define regression function as $\bfeta=\Theta+0.5\Delta$ which is slightly different from Experiment 1. The results in Table~\ref{table:exp2-1} indicate that GMA-BMAX, GD-BMAX and GD-MH-BMAX perform better than GMA-0 as iteration $k$ increases, and GMA-BMAX, GD-BMAX and GD-MH-BMAX also beat STAR, PROJ and EWMA when $k$ is large enough.

In this experiment, all of the candidates $\{\bff_1, \ldots, \bff_M\}$ are close to the truth $\bfeta=\Theta+0.5\Delta$, and they are highly correlated when $s$ is small. GMA-BMAX, GD-BMAX and GD-MH-BMAX tend to assign similar weights to similar candidates, while GMA-0 tends to exclude other correlated candidates once one has been selected. Therefore, GMA-BMAX, GD-BMAX and GD-MH-BMAX will average over those candidates with similar weights resulting less variance (also less bias due to the design), while GMA-0 will have high variance because it only selects one of the candidates. Moreover, similar to GMA-BMAX, GD-BMAX and GD-MH-BMAX, EWMA also prefer dense model by assigning similar weights to similar candidates. However, the EWMA estimator assigns weights defined as
\begin{eqnarray*}
\lambda_j \propto \pi_j \exp\left(-\frac{1}{2\omega^2}\|\bff_j-\bfY\|_2^2\right) \;,
\end{eqnarray*}
while the weights in GMA-BMAX, GD-BMAX and GD-MH-BMAX are defined as
\begin{align*}
\lambda_j \propto \pi_j \exp\left(-\frac{1}{2\omega^2}\|\bff_j-\bfY\|_2^2+\frac{1-\nu}{2\omega^2}\|\bfpsi_{X}(\omega^2,\nu)-\bff_j\|_2^2\right) \;,
\end{align*}
where $\bfpsi_{X}(\omega^2,\nu)=\kf_{\bflambda}$ with $\bflambda\in\Lambda^M$.
Note that for all $j$, $\|\bff_j-\bfY\|_2^2$ are roughly equal to each other under this scenario. That is, the EWMA estimator becomes the average of all candidates. However, the GMA-BMAX, GD-BMAX and GD-MH-BMAX estimators are still weighed averages of all bases, and the weights are adjusted by the extra term $\|\bfpsi_{X}(\omega^2,\nu)-\bff_j\|_2^2$. Therefore, we would hope GMA-BMAX, GD-BMAX and GD-MH-BMAX have smaller variances than EWMA.

Figure~\ref{fig:exp2} compares the regrets of GMA-BMAX, GD-BMAX, GD-MH-BMAX, GMA-0 and EWMA. (a-e) summarize the histograms of the regrets, and the corresponding cumulative frequencies are represented in Table~\ref{table:exp2-2}. GMA-BMAX, GD-BMAX and GD-MH-BMAX have the most concentrated distributions, because they are optimal both in expectation and in deviation. (f) illustrates the convergence of GMA-BMAX, GD-BMAX, GD-MH-BMAX and GMA-0. All these methods converge within a few iterations. As expected, GMA-BMAX, GD-BMAX and GD-MH-BMAX achieve lower regret than GMA-0 when the basis functions are correlated.

\section{Conclusion}

This paper introduces a new formulation for deviation optimal model averaging which we refer to as BMAX. It is motivated by Bayesian theoretical considerations with an appropriately defined exponentiated least squares loss. Moreover we established a primal-dual relationship of this
estimator and the $Q$-aggregation scheme (with KL entropy) by \cite{DaiRigZhang12}. This relationship not only establishes a natural Bayesian
interpretation for $Q$-aggregation but also leads to new numerical algorithms for model aggregation that are suitable for the
continuous dictionary setting where some basis functions are highly correlated. The new formulation and its relationship to $Q$-aggregation provides deeper understanding of deviation
optimal model averaging procedures.

% if have a single appendix:
%\appendix[Proof of the Zonklar Equations]
% or
%\appendix  % for no appendix heading
% do not use \section anymore after \appendix, only \section*
% is possibly needed

% use appendices with more than one appendix
% then use \section to start each appendix
% you must declare a \section before using any
% \subsection or using \label (\appendices by itself
% starts a section numbered zero.)
%

\appendix
\section{Proofs}

\subsection{Proof of Lemma~\ref{LEM:logJ-2ord-convexity}}
Define $\bflambda\in\Lambda^M$ as
\begin{eqnarray*}
\lambda_j \propto \pi_j \exp\left(-\frac{1}{2\omega^2}\|\bff_j-\bfY\|_2^2+\frac{1-\nu}{2\omega^2}\|\bfpsi-\bff_j\|_2^2\right).
\end{eqnarray*}

It follows that
\begin{eqnarray*}
\frac{\nabla J(\bfpsi)}{J(\bfpsi)} = \frac{1-\nu}{\omega^2} (\bfpsi - \kf_{\bflambda}),
\end{eqnarray*}
and
\begin{align*}
\frac{\nabla^2 J(\bfpsi)}{J(\bfpsi)}
=\sum_{j=1}^M \lambda_j \left(\left(\frac{1-\nu}{\omega^2}\right)^2 (\bfpsi-\bff_j)(\bfpsi-\bff_j)^\top + \left(\frac{1-\nu}{\omega^2}\right) \bfI_n \right) .
\end{align*}

Then we have
\begin{align*}
\nabla^2 \log J(\bfpsi)
&=\frac{(\nabla^2 J(\bfpsi))J(\bfpsi)-(\nabla J(\bfpsi))(\nabla J(\bfpsi))^\top}{J^2(\bfpsi)}   \\
&=\sum_{j=1}^M \lambda_j \left(\left(\frac{1-\nu}{\omega^2}\right)^2 (\bfpsi-\bff_j)(\bfpsi-\bff_j)^\top+ \left(\frac{1-\nu}{\omega^2}\right) \bfI_n \right) 
-\left(\frac{1-\nu}{\omega^2}\right)^2 (\bfpsi - \kf_{\bflambda})(\bfpsi - \kf_{\bflambda})^\top \\
&=\left(\frac{1-\nu}{\omega^2}\right) \bfI_n +
\sum_{j=1}^M \lambda_j \left(\frac{1-\nu}{\omega^2}\right)^2 (\kf_{\bflambda}-\bff_j)(\kf_{\bflambda}-\bff_j)^\top.
\end{align*}
Therefore, $\nabla^2 \log J(\bfpsi) \geq \left(\frac{1-\nu}{\omega^2}\right) \bfI_n$.

With the assumption that $\|\bff_j\|_2 \leq L$ for all $j$, we have
\begin{align*}
\sum_{j=1}^M \lambda_j (\kf_{\bflambda}-\bff_j)(\kf_{\bflambda}-\bff_j)^\top
=\sum_{j=1}^M \lambda_j \bff_j\bff_j^\top - \kf_{\bflambda}\kf_{\bflambda}^\top
\leq \sum_{j=1}^M \lambda_j \bff_j\bff_j^\top
\leq \sum_{j=1}^M \lambda_j L^2 \bfI_n
= L^2 \bfI_n .
\end{align*}
It follows that
$\nabla^2 \log J(\bfpsi) \leq \left(\left(\frac{1-\nu}{\omega^2}\right)+ \left(\frac{1-\nu}{\omega^2}\right)^2 L^2 \right)\bfI_n $.
\epr

\subsection{Proof of Lemma~\ref{LEM:Q-S-T}}
\begin{lemma}
For any $\bflambda \in \Lambda^M$, real numbers $\{x_j\}_{j=1}^M$ and some constant $a > 0$, we have
\begin{eqnarray*}
\sum_{j=1}^M \lambda_j x_j - a \cK(\bflambda,\bfpi) \leq a \log\left( \sum_{j=1}^M \pi_j e^{x_j/a} \right) \;,
\end{eqnarray*}
where the equality is obtained when $(x_j/a) - \log(\lambda_j/\pi_j)$ is a constant for $1\leq j \leq M$.
\label{PROP:jensen}
\end{lemma}

\noindent
\emph{Proof:}
The result follows directly from Jensen's Inequality as
\begin{align*}
\exp \left(\sum_{j=1}^M \lambda_j ((x_j/a) - \log(\lambda_j/\pi_j)) \right)
\leq \sum_{j=1}^M \lambda_j \exp \left(  (x_j/a) - \log(\lambda_j/\pi_j) \right)
= \sum_{j=1}^M \pi_j e^{x_j/a} .
\end{align*}
\epr

Now by setting $x_j=-\nu\|\bff_j-\bfh\|_2^2$ and $a=2\omega^2$ in
Lemma~\ref{PROP:jensen}, we obtain
\begin{align*}
&\min_{\bflambda \in \Lambda^M} \left( \nu \sum_{j=1}^M \lambda_j\|\bff_j-\bfh\|_2^2+
    2\omega^2\cK(\bflambda,\bfpi) \right)
  -\frac{\nu}{1-\nu}\|\bfh-\bfY\|_2^2 \\
&=
 -\frac{\nu}{1-\nu}\|\bfh-\bfY\|_2^2-2\omega^2\log\left(\sum_{j=1}^M\pi_j
     e^{-\nu\|\bff_j-\bfh\|_2^2/2\omega^2}\right)  ,
\end{align*}
which implies that
\[
T(\bfh) = \min_{\bflambda \in \Lambda^M} S(\bflambda,\bfh) .
\]
In addition, it is easy to verify that 
\[
Q(\bflambda) = \max_{\bfh \in \R^n} S(\bflambda,\bfh) , 
\]
where the minimum is achieved at $\bfh= \kf_{\bflambda} $.

Now let $\hat{\bfh}$ be the maximizer of $T(\bfh)$ in \eqref{EQ:defT}, then by setting the derivative of \eqref{EQ:defT} to zero, it is easy to
observe that there exists a corresponding $\hat{\bflambda}$ so that $(\hat{\bflambda},\hat{\bfh}) \in A\cap B$. This means that
$A \cap B \neq \emptyset$.

Now consider any $(\bflambda^{0},\bfh^{0}) \in A\cap B$. We have
\begin{eqnarray*}
Q(\bflambda^{0}) \geq \min_{\bflambda \in \Lambda^M} Q(\bflambda) = \min_{\bflambda \in \Lambda^M} \max_{\bfh \in \R^n} S(\bflambda,\bfh) \geq \max_{\bfh \in \R^n} \min_{\bflambda \in \Lambda^M} S(\bflambda,\bfh) .
\end{eqnarray*}
The third inequality is the well-known weak duality (e.g., Lemma~36.1 in~\cite{Rockafellar97}).

Also we have
\begin{eqnarray*}
\max_{\bfh \in \R^n} \min_{\bflambda \in \Lambda^M} S(\bflambda,\bfh) 
= \max_{\bfh \in \R^n} T(\bfh) = T(\hat\bfh) \geq T(\bfh^{0}) \;.
\end{eqnarray*}
We thus have
\begin{align*}
Q(\bflambda^{0})
\geq \min_{\bflambda \in \Lambda^M} Q(\bflambda) = \min_{\bflambda \in \Lambda^M} \max_{\bfh \in \R^n} S(\bflambda,\bfh) 
\geq \max_{\bfh \in \R^n} \min_{\bflambda \in \Lambda^M} S(\bflambda,\bfh) = \max_{\bfh \in \R^n} T(\bfh) \geq T(\bfh^{0}) .
\end{align*}

Our target is now to prove $Q(\bflambda^{0})=T(\bfh^{0})$.
Since $(\bflambda^{0},\bfh^{0}) \in A\cap B$, we have
\begin{eqnarray*}
  \left\{
   \begin{aligned}
\bfh^{0} &= \frac{1}{\nu}\bfY-\frac{1-\nu}{\nu}\bff_{\bflambda^{0}} , \\
\lambda^{0}_j &= \frac{\exp\left(-\frac{\nu}{2\omega^2}\|\bff_j-\bfh^{0}\|_2^2\right)\pi_j}{\sum_{i=1}^M \exp\left(-\frac{\nu}{2\omega^2}\|\bff_i-\bfh^{0}\|_2^2\right)\pi_i} .
   \end{aligned}
  \right.
\end{eqnarray*}

It follows that for all $j$:
\begin{eqnarray*}
\sum_{i=1}^M \exp\left(-\frac{\nu}{2\omega^2}\|\bff_i-\bfh^{0}\|_2^2\right)\pi_i = \frac{\exp\left(-\frac{\nu}{2\omega^2}\|\bff_j-\bfh^{0}\|_2^2\right)\pi_j}{\lambda^{0}_j} ,
\end{eqnarray*}
which implies that
\begin{align*}
\log \left(\sum_{i=1}^M \exp\left(-\frac{\nu}{2\omega^2}\|\bff_i-\bfh^{0}\|_2^2\right)\pi_i \right)
&=-\frac{\nu}{2\omega^2}\|\bff_j-\bfh^{0}\|_2^2 -\log(\lambda^{0}_j/\pi_j)
\\
&=\sum_{i=1}^M \lambda^{0}_i \left( -\frac{\nu}{2\omega^2}\|\bff_i-\bfh^{0}\|_2^2 -\log(\lambda^{0}_i/\pi_i) \right) \;,
\end{align*}
where the second equation is from summing up two sides of the first equation with weight $\lambda^{0}_i$ over $i=1,\ldots,M$.

Plug back into $T(\bfh^{0})$, we obtain
\begin{align*}
T(\bfh^{0})
&=-\frac{\nu}{1-\nu}\|\bfh^{0}-\bfY\|_2^2
-2\omega^2 \Bigg[ \sum_{i=1}^M \lambda^{0}_i \Big( -\frac{\nu}{2\omega^2}\|\bff_i-\bfh^{0}\|_2^2
-\log(\lambda^{0}_i/\pi_i) \Big) \Bigg] 
\\
&=-\frac{\nu}{1-\nu}\|\bfh^{0}-\bfY\|_2^2 + \nu \sum_{i=1}^M \lambda^{0}_i \|\bff_i-\bfh^{0}\|_2^2 + 2\omega^2\cK(\bflambda^{0},\bfpi) 
\\
&=\|\bff_{\bflambda^{0}} -\bfY\|_2^2 + \nu \sum_{i=1}^M \lambda^{0}_i \|\bff_i-\bff_{\bflambda^{0}}\|_2^2 + 2\omega^2\cK(\bflambda^{0},\bfpi) 
\\
&=Q(\bflambda^{0}) \;,
\end{align*}
where the third equality is obtained by plugging in $\bfh^{0} = \frac{1}{\nu}\bfY-\frac{1-\nu}{\nu}\bff_{\bflambda^{0}}$.
Therefore, 
\begin{align*}
Q(\bflambda^{0})
=\min_{\bflambda \in \Lambda^M} Q(\bflambda) = \min_{\bflambda \in \Lambda^M} \max_{\bfh \in \R^n} S(\bflambda,\bfh) 
=\max_{\bfh \in \R^n} \min_{\bflambda \in \Lambda^M} S(\bflambda,\bfh) = \max_{\bfh \in \R^n} T(\bfh) = T(\bfh^{0}) .
\end{align*}

Since $Q(\cdot)$ is strictly convex and $T(\cdot)$ is strictly concave, we have
$\bfh^{0}=\hat\bfh$ is the unique solution of $\max_{\bfh} T(\bfh)$,
and $\bflambda^{0}=\bflambda$ is the unique solution of $\min_{\bflambda} Q(\bflambda)$. 
Using $\bfh^{0} = \frac{1}{\nu}\bfY-\frac{1-\nu}{\nu}\bff_{\bflambda^{0}}$, we have
\begin{eqnarray*}
\hat\bfh = \frac{1}{\nu}\bfY-\frac{1-\nu}{\nu}\kf_{\bflambdaq} .
\end{eqnarray*}
This proves that $A\cap B$ contains the unique point $(\bflambdaq,\hat\bfh)$.
\epr

\subsection{Proof of Proposition~\ref{prop:approx-sol}}
The strong convexity of $\log J(\cdot)$ in \eqref{INEQ:logJ-Hes-LB} implies that
\begin{align*}
\|\hat{\bfpsi}-\bfpsi_{X}(\omega^2,\nu)\|_2^2
\leq \frac{2}{A_1}\left(\log J(\hat{\bfpsi})-\log J(\bfpsi_{X}(\omega^2,\nu))\right)
\leq 2 \epsilon /A_1 .
\end{align*}
Now plug the above inequality into the following equation
\begin{align*}
\|\hat{\bfpsi}-\bfeta\|_2^2
=\|\bfpsi_{X}(\omega^2,\nu)-\bfeta\|_2^2 + 2     \|\hat{\bfpsi}-\bfpsi_{X}(\omega^2,\nu)\|_2
\|\bfpsi_{X}(\omega^2,\nu)-\bfeta\|_2
+\|\hat{\bfpsi}-\bfpsi_{X}(\omega^2,\nu)\|_2^2 ,
\end{align*}
we obtain the desired bound.
\epr

\subsection{Proof of Proposition~\ref{PROP:GMA-approxlogJ}}
From definition, $\bfpsi_{X}(\omega^2,\nu)=\kf_{\bflambda}$ with
$\bflambda\in\Lambda^M$ defined as
\begin{eqnarray*}
\lambda_j \propto \pi_j \exp\left(-\frac{1}{2\omega^2}\|\bff_j-\bfY\|_2^2+\frac{1-\nu}{2\omega^2}\|\bfpsi_{X}(\omega^2,\nu)-\bff_j\|_2^2\right) .
\end{eqnarray*}

For any $j=1,\ldots,M$,
\begin{align*}
\log J (\bfpsi^{(k)})
&=\log J \left(\bfpsi^{(k-1)} + \alpha_k(\bff_{J^{(k)}} - \bfpsi^{(k-1)})\right) \\
&\leq \log J \left(\bfpsi^{(k-1)} + \alpha_k(\bff_j - \bfpsi^{(k-1)})\right) \\
&\leq \log J (\bfpsi^{(k-1)})+ \alpha_k(\bff_j - \bfpsi^{(k-1)})^\top \frac{\nabla J(\bfpsi^{(k-1)})}{J(\bfpsi^{(k-1)})} + 2 \alpha_k^2 A_3,
\end{align*}
where the first inequality comes from definition, the second inequality is from Taylor expansion at $\bfpsi^{(k-1)}$ and \eqref{INEQ:logJ-Hes-UB}
in Lemma~\ref{LEM:logJ-2ord-convexity} with the fact that $\|\bff_j - \bfpsi^{(k-1)}\|_2^2\leq 4L^2$.

We multiply the above inequality by $\lambda_j$ and sum over $j$ to obtain
\begin{align*}
\log J (\bfpsi^{(k)})
&\leq \log J (\bfpsi^{(k-1)})+ \alpha_k \sum_{j=1}^M \lambda_j(\bff_j - \bfpsi^{(k-1)})^\top \frac{\nabla J(\bfpsi^{(k-1)})}{J(\bfpsi^{(k-1)})} + 2\alpha_k^2 A_3 \\
&= \log J (\bfpsi^{(k-1)})+ \alpha_k (\bfpsi_{X}(\omega^2,\nu) - \bfpsi^{(k-1)})^\top \frac{\nabla J(\bfpsi^{(k-1)})}{J(\bfpsi^{(k-1)})} + 2\alpha_k^2 A_3 \\
&\leq \log J (\bfpsi^{(k-1)})+ \alpha_k (\log J(\bfpsi_{X}(\omega^2,\nu))-\log J(\bfpsi^{(k-1)})) + 2\alpha_k^2 A_3 ,
\end{align*}
where the last inequality follows from the convexity of $\log J(\bfpsi)$.

Denote by $\delta_k = \log J (\bfpsi^{(k)}) -\log J(\bfpsi_{X}(\omega^2,\nu))$, it follows that
\begin{eqnarray*}
\delta_k \leq (1-\alpha_k) \delta_{k-1}  + 2\alpha_k^2 A_3 .
\end{eqnarray*}

We now bound $\delta_0$. If we let $\mu_j \propto \pi_j \exp\left(-\frac{1}{2\omega^2}\|\bff_j-\bfY\|_2^2\right)$ such that $\sum_{j=1}^M \mu_j =1$,
then
\begin{align}
\delta_0
&= \log J (\bfpsi^{(0)}) -\log J(\bfpsi_{X}(\omega^2,\nu)) 
\nonumber
\\
&= \log \sum_j \mu_j \exp\left(\frac{1-\nu}{2\omega^2}\|\bfpsi^{(0)}-\bff_j\|_2^2\right)
-\log \sum_j \mu_j \exp\left(\frac{1-\nu}{2\omega^2}\|\bfpsi_{X}(\omega^2,\nu)-\bff_j\|_2^2\right) 
\nonumber
\\
&\leq \log \left( \sum_{j=1}^M \mu_j \exp\left(\frac{1-\nu}{2\omega^2}\|\bfpsi^{(0)}-\bff_j\|_2^2\right) \right) \nonumber\\
&\leq  \frac{1-\nu}{2\omega^2} L^2 \leq 2A_3 . 
\label{eq:delta0}
\end{align}

The claim thus hold for $\delta_0$. By mathematical induction, if $\delta_{k-1} \leq \frac{8A_3}{k+2}$, then
\begin{align*}
\delta_k
\leq (1-\alpha_k) \delta_{k-1}  + 2\alpha_k^2 A_3
\leq (1-2/(k+1)) \frac{8A_3}{k+2} + 2(2/(k+1))^2 A_3 \leq \frac{8A_3}{k+3} .
\end{align*}
This proves the desired bound.
\epr

\subsection{Proof of Proposition~\ref{prop:opt-GMA}}
We have
\begin{align*}
\|\bfpsi^{(k)}-\bfpsi_{X}(\omega^2,\nu)\|_2^2
\leq \frac{2}{A_1}\left(\log J(\bfpsi^{(k)})-\log J(\bfpsi_{X}(\omega^2,\nu))\right)
\leq \frac{2}{A_1}\frac{8A_3}{k+3} = \frac{16A_3}{A_1(k+3)},
\end{align*}
where the first inequality comes from Taylor expansion at point $\bfpsi_{X}(\omega^2,\nu)$, with using \eqref{INEQ:logJ-Hes-LB} in Lemma~\ref{LEM:logJ-2ord-convexity} and $\nabla J(\bfpsi_2)$; and the second inequality is from Proposition~\ref{PROP:GMA-approxlogJ}.
It follows that
\begin{align*}
\|\bfpsi^{(k)}-\bfeta\|_2^2
&=\|(\bfpsi^{(k)}-\bfpsi_{X}(\omega^2,\nu))+(\bfpsi_{X}(\omega^2,\nu)-\bfeta)\|_2^2 
\\
&\leq\|\bfpsi_{X}(\omega^2,\nu)-\bfeta\|_2^2 
+ 2  \|\bfpsi_{X}(\omega^2,\nu)-\bfeta\|_2 \|\bfpsi^{(k)}-\bfpsi_{X}(\omega^2,\nu)\|_2 
+ \|\bfpsi^{(k)}-\bfpsi_{X}(\omega^2,\nu)\|_2^2 
\\
&\leq\|\bfpsi_{X}(\omega^2,\nu)-\bfeta\|_2^2
+ 2\sqrt{\frac{16A_3}{A_1(k+3)}}\|\bfpsi_{X}(\omega^2,\nu)-\bfeta\|_2 
+ \frac{16A_3}{A_1(k+3)}.
\end{align*}
\epr

\subsection{Proof of Proposition~\ref{PROP:GD-approxlogJ}}
The update operation implies
\[
\bfpsi^{(k)}= \bfpsi^{(k-1)}-t_k \nabla\log J(\bfpsi^{(k-1)}).
\]

Then, 
\begin{align*}
\log J(\bfpsi^{(k)})
&=\log J(\bfpsi^{(k-1)}-t_k \nabla\log J(\bfpsi^{(k-1)}))\\
&\leq\log J(\bfpsi^{(k-1)}) - t_k \|\nabla\log J(\bfpsi^{(k-1)}))\|_2^2 
+ (A_2/2) t_k^2 \|\nabla\log J(\bfpsi^{(k-1)}))\|_2^2 
\\
&=\log J(\bfpsi^{(k-1)}) - (t_k -(A_2/2)t_k^2)\|\nabla\log J(\bfpsi^{(k-1)}))\|_2^2,
\end{align*}
where the inequality is from \eqref{INEQ:logJ-Hes-UB}.
By subtracting $\log J(\bfpsi_{X}(\omega^2,\nu))$ by each side, we have
\begin{align}
\log J(\bfpsi^{(k)}) - \log J(\bfpsi_{X}(\omega^2,\nu))
\leq\ &\log J(\bfpsi^{(k-1)})- \log J(\bfpsi_{X}(\omega^2,\nu)) 
\nonumber
\\
&- (t_k -(A_2/2)t_k^2)\|\nabla\log J(\bfpsi^{(k-1)}))\|_2^2.
\label{INEQ:PROP-GD-approxlogJ-1}
\end{align}

Also from \eqref{INEQ:logJ-Hes-UB} we have
\begin{align}
\|\nabla\log J(\bfpsi^{(k-1)}))\|_2^2 
\geq2A_1 \left(\log J(\bfpsi^{(k-1)})- \log J(\bfpsi_{X}(\omega^2,\nu))\right).
\label{INEQ:PROP-GD-approxlogJ-2}
\end{align}
Choose fixed step size $t_k=s \in(0,2/A_2)$ for any $k>0$. Combining \eqref{INEQ:PROP-GD-approxlogJ-1} and \eqref{INEQ:PROP-GD-approxlogJ-2} results
\begin{align*}
\log J(\bfpsi^{(k)}) - \log J(\bfpsi_{X}(\omega^2,\nu)) 
\leq[1- 2A_1(s -(A_2/2)s^2)]  \left(\log J(\bfpsi^{(k-1)})- \log J(\bfpsi_{X}(\omega^2,\nu))\right).
\end{align*}
It follows that
\begin{align*}
\log J(\bfpsi^{(k)}) - \log J(\bfpsi_{X}(\omega^2,\nu)) 
\leq[1- 2A_1(s -(A_2/2)s^2)]^k  \left(\log J(\bfpsi^{(0)})- \log J(\bfpsi_{X}(\omega^2,\nu))\right).
\end{align*}
\epr

\subsection{Proof of Proposition~\ref{PROP:opt-GD}}
We choose $t_k=s$ as in Remark \ref{rema:1}. Then,
\begin{align*}
\|\bfpsi^{(k)}-\bfpsi_{X}(\omega^2,\nu)\|_2^2
&\leq\frac{2}{A_1}\left(\log J(\bfpsi^{(k)})-\log J(\bfpsi_{X}(\omega^2,\nu))\right) 
\\
&\leq\frac{2}{A_1}(1-A_1/A_2)^k \log J(\bfpsi^{(0)}) 
\\
&\leq\frac{2}{A_1}(1-A_1/A_2)^k \frac{1-\nu}{2\omega^2}L^2 
\\
&= L^2(1-A_1/A_2)^k,
\end{align*}
where the first inequality comes from Taylor expansion at point $\bfpsi_{X}(\omega^2,\nu)$, with using \eqref{INEQ:logJ-Hes-LB} in Lemma~\ref{LEM:logJ-2ord-convexity} and $\nabla \log J(\bfpsi_{X}(\omega^2,\nu))=0$;
the second inequality is from Proposition~\ref{PROP:GD-approxlogJ};
and the third inequality is from assumption \eqref{CON:f-l2normbound} resulting $\log J(\bfpsi^{(0)}) \leq \frac{1-\nu}{2\omega^2}L^2$.
It follows that
\begin{align*}
\|\bfpsi^{(k)}-\bfeta\|_2^2
&=\|(\bfpsi^{(k)}-\bfpsi_{X}(\omega^2,\nu))+(\bfpsi_{X}(\omega^2,\nu)-\bfeta)\|_2^2 \\
&\leq\|\bfpsi_{X}(\omega^2,\nu)-\bfeta\|_2^2
+ 2  \|\bfpsi_{X}(\omega^2,\nu)-\bfeta\|_2 \|\bfpsi^{(k)}-\bfpsi_{X}(\omega^2,\nu)\|_2 
+\|\bfpsi^{(k)}-\bfpsi_{X}(\omega^2,\nu)\|_2^2
\\
&\leq\|\bfpsi_{X}(\omega^2,\nu)-\bfeta\|_2^2
+ 2\sqrt{L^2(1-A_1/A_2)^k}\|\bfpsi_{X}(\omega^2,\nu)-\bfeta\|_2 
+L^2(1-A_1/A_2)^k.
\end{align*}
\epr

\subsection{Proof of Proposition~\ref{PROP:GD-approxlogJ-MH}}
Given $\bfY$, the expectation is with respect to the randomness from the MH algorithm. For $k>0$, $\bfu_T^{(k-1)}$ from Algorithm~\ref{ALG:MH} is an estimator of $\kf_{\bflambda^{(k-1)}}= \sum_{j=1}^M \lambda_j^{(k-1)} \bff_j$. Then in Algorithm~\ref{ALG:GD}, we update $\bfpsi^{(k)}$ by
\[
\bfpsi^{(k)}= \bfpsi^{(k-1)} - t_k \frac{1-\nu}{\omega^2} (\bfpsi^{(k-1)}-\bfu_T^{(k-1)}).
\]
Denote $v^{(k-1)} = \frac{1-\nu}{\omega^2} (\bfpsi^{(k-1)}-\bfu_T^{(k-1)})$, then we have
\begin{eqnarray*}
\E[v^{(k-1)}|\bfpsi^{(k-1)}] = \frac{1-\nu}{\omega^2} (\bfpsi^{(k-1)}-\kf_{\bflambda^{(k-1)}}) = \nabla\log J(\bfpsi^{(k-1)}),
\end{eqnarray*}
and
\begin{align*}
\|COV[v^{(k-1)}|\bfpsi^{(k-1)}] \|_{op} 
=\left(\frac{1-\nu}{\omega^2}\right)^2 \|COV[\bfu_T^{(k-1)}|\bfpsi^{(k-1)}] \|_{op}
\leq \left(\frac{1-\nu}{\omega^2}\right)^2 s^2.
\end{align*}
It follows that
\begin{align*}
\log J(\bfpsi^{(k)})
=\ &\log J(\bfpsi^{(k-1)}-t_k v^{(k-1)})\\
\leq\ &\log J(\bfpsi^{(k-1)}) - t_k \nabla\log J(\bfpsi^{(k-1)})^\top v^{(k-1)} 
+ (A_2/2) t_k^2 \|v^{(k-1)}\|_2^2,
\end{align*}
where the inequality is from \eqref{INEQ:logJ-Hes-UB}.
Then, by subtracting $\log J(\bfpsi_{X}(\omega^2,\nu))$ from each side of the above equation and taking expectation conditioned on $\bfpsi^{(k-1)}$, we have
\begin{align*}
\E [\delta_k | \bfpsi^{(k-1)}]
&\leq\delta_{k-1} - t_k \nabla\log J(\bfpsi^{(k-1)})^\top \E[v^{(k-1)}|\bfpsi^{(k-1)}] 
+ (A_2/2) t_k^2 \E[\|v^{(k-1)}\|_2^2|\bfpsi^{(k-1)}] 
\\
&\leq\delta_{k-1} - t_k\|\nabla\log J(\bfpsi^{(k-1)})\|_2^2
+(A_2/2) t_k^2 \left(\|\nabla\log J(\bfpsi^{(k-1)})\|_2^2+ n\left(\frac{1-\nu}{\omega^2}\right)^2s^2 \right) 
\\
&=\delta_{k-1} - \frac{1}{2A_2}\|\nabla\log J(\bfpsi^{(k-1)})\|_2^2 + \frac{1}{2A_2}\left(\frac{1-\nu}{\omega^2}\right)^2 ns^2,
\end{align*}
where $\delta_k= \log J(\bfpsi^{(k)})- \log J(\bfpsi_{X}(\omega^2,\nu)) $ and $t_k=s=1/A_2$ as in Remark \ref{rema:1}. Combining the above inequality with
\begin{align}
\|\nabla\log J(\bfpsi^{(k-1)}))\|_2^2 \geq 2A_1 \Big(&\log J(\bfpsi^{(k-1)})
-\log J(\bfpsi_{X}(\omega^2,\nu))\Big),
\end{align}
which is from \eqref{INEQ:logJ-Hes-UB}, we have
\begin{eqnarray*}
\E [\delta_k | \bfpsi^{(k-1)}]
\leq \delta_{k-1} (1-A_1/A_2) + \frac{A_1^2}{2A_2} ns^2.
\end{eqnarray*}
Therefore, it follows that
\begin{eqnarray*}
\E [\delta_k]
\leq \E[\delta_{k-1}] (1-A_1/A_2) + \frac{A_1^2}{2A_2} ns^2,
\end{eqnarray*}
and we have
\begin{eqnarray*}
\E [\delta_k]
\leq \E[\delta_{0}] (1-A_1/A_2)^k + \frac{A_1}{2} ns^2.
\end{eqnarray*}
\epr

\bibliographystyle{unsrt}
\bibliography{ref}

\begin{thebibliography}{10}

\bibitem{DaiRigZhang12}
Dong Dai, Philippe Rigollet, Tong Zhang, et~al.
\newblock Deviation optimal learning using greedy $q$-aggregation.
\newblock {\em Ann. Statist.}, 40(3):1878--1905, 2012.

\bibitem{Tsy03}
Alexandre~B Tsybakov.
\newblock Optimal rates of aggregation.
\newblock In {\em Learning Theory and Kernel Machines (Lecture Notes in
  Computer Science)}, volume 2777, pages 303--313. Springer, 2003.

\bibitem{Rig12}
Philippe Rigollet et~al.
\newblock Kullback--leibler aggregation and misspecified generalized linear
  models.
\newblock {\em Ann. Statist.}, 40(2):639--665, 2012.

\bibitem{RigTsy12}
Philippe Rigollet, Alexandre~B Tsybakov, et~al.
\newblock Sparse estimation by exponential weighting.
\newblock {\em Statistical Science}, 27(4):558--575, 2012.

\bibitem{Aud08}
Jean-Yves Audibert.
\newblock Progressive mixture rules are deviation suboptimal.
\newblock In {\em Advances in Neural Information Processing Systems}, pages
  41--48. MIT Press, 2008.

\bibitem{LecMen09}
Guillaume Lecu{\'e} and Shahar Mendelson.
\newblock Aggregation via empirical risk minimization.
\newblock {\em Probab. Theory Rel. Fields}, 145(3):591--613, 2009.

\bibitem{GaiLec11}
St{\'e}phane Ga{\"\i}ffas and Guillaume Lecu{\'e}.
\newblock Hyper-sparse optimal aggregation.
\newblock {\em J. Mach. Learn. Res.}, 12(Jun):1813--1833, 2011.

\bibitem{DaiZha11}
Dong Dai and Tong Zhang.
\newblock Greedy model averaging.
\newblock In {\em Advances in Neural Information Processing Systems}, pages
  1242--1250. MIT Press, 2011.

\bibitem{DalTsy07}
Arnak Dalalyan and Alexandre Tsybakov.
\newblock Aggregation by exponential weighting and sharp oracle inequalities.
\newblock {\em Learning Theory (Lecture Notes in Computer Science)}, pages
  97--111, 2007.

\bibitem{DalTsy08}
Arnak Dalalyan and Alexandre~B Tsybakov.
\newblock Aggregation by exponential weighting, sharp pac-bayesian bounds and
  sparsity.
\newblock {\em Mach. Learn.}, 72(1):39--61, 2008.

\bibitem{leung2006information}
Gilbert Leung and Andrew~R Barron.
\newblock Information theory and mixing least-squares regressions.
\newblock {\em IEEE Trans. Inf. Theory}, 52(8):3396--3410, 2006.

\bibitem{FraWol56}
Marguerite Frank and Philip Wolfe.
\newblock An algorithm for quadratic programming.
\newblock {\em Naval Res. Logistics Quart.}, 3(1-2):95--110, 1956.

\bibitem{Jon92}
Lee~K. Jones.
\newblock A simple lemma on greedy approximation in {H}ilbert space and
  convergence rates for projection pursuit regression and neural network
  training.
\newblock {\em Ann. Statist.}, 20(1):608--613, 1992.

\bibitem{Barron93}
Andrew~R. Barron.
\newblock Universal approximation bounds for superposition of a sigmoidal
  function.
\newblock {\em IEEE Trans. Inf. Theory}, 39(3):930--945, 1993.

\bibitem{MarRob07}
Jean-Michel Marin and Christian Robert.
\newblock {\em Bayesian core: a practical approach to computational Bayesian
  statistics}.
\newblock Springer, 2007.

\bibitem{Rockafellar97}
Ralph~Tyrell Rockafellar.
\newblock {\em Convex analysis}.
\newblock Princeton Univ. Press, 2015.

\end{thebibliography}

\end{document}